\definecolor{labelkey}{rgb}{0.6,0,1}
\definecolor{labelkey}{rgb}{0.6,0,1}
\definecolor{violet}{rgb}{0.580,0.,0.827}
\def\corr#1#2#3{\typeout{Warning : a correction remains in page
\thepage}
        {\color{blue}{\ifmmode\text{\,\sout{\ensuremath{#1}}\,}\else\sout{#1}\fi}}%
        \textcolor{violet}{#2}
        \textcolor{red}{#3}}
\newcounter{cst}
\def\ctel#1{C_{\refstepcounter{cst}\@bsphack
\protected@write\@auxout{}%
           {\string\newlabel{#1}{{\thecst}{\thepage}}}\thecst}}
\newcounter{cexp}
\def\terml#1{T_{\refstepcounter{cexp}\@bsphack
\protected@write\@auxout{}%
           {\string\newlabel{#1}{{\thecexp}{\thepage}}}\thecexp}}
\newcommand{\mathbi}[1]{{\boldsymbol #1}}
\newcommand{\eop}{{\unskip\nobreak\hfil\penalty50
           \hskip2em\hbox{}\nobreak\hfil\mbox{\rule{1ex}{1ex} \qquad}
   \parfillskip=0pt
   \finalhyphendemerits=0\par\medskip}}
\renewenvironment{proof}[1][]{\noindent {\bf Proof#1. } }{\eop}
\newtheorem{theorem}{Theorem}[section]
\newtheorem{remark}[theorem]{Remark}
\newtheorem{lemma}[theorem]{Lemma} 
\definecolor{shadecolor}{gray}{0.92}
\definecolor{TFFrameColor}{gray}{0.92}
\definecolor{TFTitleColor}{rgb}{0,0,0}
\newcommand{\ba}{\begin{array}{llll}   }
\newcommand{\bac}{\begin{array}{c}}
\newcommand{\bari}{\begin{array}{r}}
\newcommand{\ea}{\end{array}}
\newcommand{\ban}{\begin{array}{llll}}
\newcommand{\ean}{\end{array}}
\newcommand{\be}{\begin{equation}}
\newcommand{\ee}{\end{equation}}
\newcommand{\beqsys }{\beqtab \left \{ \begin{array}{l}}
\newcommand{\eeqsys }{\end{array} \right . \eeqtab }
\newcommand{\benum}{\begin{enumerate}}
\newcommand{\eenum}{\end{enumerate}}
\newcommand{\beqtab}{\begin{eqnarray}} 
\newcommand{\eeqtab}{\end{eqnarray}}
\newcommand{\bfn}{\mathbf{n}}
\newcommand{\centercv}{\x_\cv}                         
\newcommand{\centers}{\mathcal{P}}
\newcommand{\cv}{K}
\renewcommand{\d}{{\rm d}}
\newcommand{\darcyU}{\mathbf{u}}
\newcommand{\dcvedge}{d_{\cv,\edge}}
\newcommand{\disc}{{\mathcal D}}
\renewcommand{\div}{{\mathop{\rm div}}}
\newcommand{\proj}{\mathcal{P}}
\newcommand{\edge}{\sigma}
\newcommand{\edges}{{\mathcal E}}              
\newcommand{\edgescv}{{{\edges}_\cv}}
\newcommand{\grad}{\nabla}
\newcommand{\mesh}{{\mathcal M}}
\newcommand{\ncvedge }{\bfn_{\cv,\edge}}  
\renewcommand{\O}{\Omega}
\newcommand{\points}{\centers}
\newcommand{\R}{\mathbb R}
\def\v{\mathbf{v}}
\newcommand{\x}{\mathbf{x}}
\newcommand{\centeredge}{\overline{\mathbi{x}}_\edge} 
\def\ograd{\overline{\grad}}
\DeclareDocumentCommand{\RPiD}{ O{\disc} O{,0} }{\Pi_{#1}(X_{#1#2})}
\def\Fdof#1{{\bm{\mathcal{F}}(#1,\R)}}
\def\Fdof{\@ifnextchar[{\@with}{\@without}}
\def\@with[#1]#2{{\bm{\mathcal{F}}(#2;#1)}}
\def\@without#1{{\bm{\mathcal{F}}(#1,\R)}}
\def\RT0{\mathbb{RT}_0}
\begin{document}

\title[An HMM--ELLAM scheme for flows in porous media]{An HMM--ELLAM scheme on generic polygonal meshes for miscible incompressible flows in porous media}
\author{Hanz Martin Cheng}
\address{School of Mathematical Sciences, Monash University, Clayton, Victoria 3800, Australia.
\texttt{hanz.cheng@monash.edu}}
\author{J\'er\^ome Droniou}
\address{School of Mathematical Sciences, Monash University, Clayton, Victoria 3800, Australia.
\texttt{jerome.droniou@monash.edu}}
\date{\today}

%
%

\maketitle




\begin{abstract}
We design a numerical approximation of a system of partial differential equations modelling
the miscible displacement of a fluid by another in a porous medium. The advective part of the system
is discretised using a characteristic method, and the diffusive parts by a finite volume
method. The scheme is applicable on generic (possibly non-conforming) meshes as encountered in applications.
The main features of our work are the reconstruction of a Darcy velocity, from the discrete pressure fluxes,
that enjoys a local consistency property, an analysis of implementation issues faced when tracking, via the characteristic method, distorted cells, and a new treatment of cells near the injection well that accounts better for the conservativity of the injected fluid.
\end{abstract}

\section{Introduction}
This study focuses on the recovery of oil in a process known as miscible displacement, in which a solvent, such as a short-chain hydrocarbon or pressurised carbon-dioxide, is injected into the oil reservoir to reduce the viscosity of the resident oil and push it towards a production well. One of the models that describes the said process is the Peaceman model, which was first introduced in \cite{PR62}.
\newline

Let $\Omega$ be a bounded domain in $\mathbb{R}^{d}$ and $[0,T]$ be a time interval. Denote by $\mathbf{K}(\mathbf{x})$ and $\phi(\mathbf{x})$ the permeability tensor and the porosity of the medium, respectively.
Then, neglecting gravity, the Peaceman model is given by:
\begin{subequations}\label{eq:model}
	\begin{equation} \label{pressure}
	\begin{aligned}
	\nabla \cdot \mathbf{u} &= q^{+}-q^{-} := q\qquad \mbox{ on } \O \times [0,T] \\
	\mathbf{u} &= - \dfrac{\mathbf{K}}{\mu(c)} \nabla p \qquad \mbox{ on } \O \times [0,T] \\
	\end{aligned}
	\end{equation}
	\begin{equation} \label{concentration}
	\phi \dfrac{\partial c}{\partial t} + \nabla \cdot (\mathbf{u}c-\mathbf{D}(\mathbf{x},\mathbf{u})\nabla c) = q^{+}-cq^{-} := q_{c} \qquad \mbox{ on } \O \times [0,T] \\
	\end{equation}
	with unknowns  $p(\x,t), \darcyU(\x,t),$ and $c(\x,t)$ which denote the pressure of the mixture, the Darcy velocity, and the concentration of the injected solvent, respectively. Note that this form assumes an injection concentration
of $1$ (the trivial modification $q_c=\widehat{c}q^+-cq^-$ would allow for a generic injection concentration $\widehat{c}$, but numerical tests always consider $\widehat{c}=1$). We also note that the model \eqref{eq:model} is derived under the assumption that the fluid and the rock are incompressible. The more generic formulation for the pressure equation \eqref{pressure} is given by $\nabla\cdot\mathbf{u}=q + \phi c_s\dfrac{\partial p}{\partial t}$, where $c_s$ is the total compressibility of the system. The concentration equation \eqref{concentration} is then modified accordingly. In particular, the compressibility $c_s$ is related to $\dfrac{\partial \rho}{\partial p}$ where $\rho(p)$ is the density of the fluid, and the assumption that $c_s=0$ implies that the density is constant. However, in many cases, $c_s$ is very small and negligible, and hence the assumption on incompressibility is not very restrictive \cite{E83-Mathematics-Reservoir}.  As a matter of fact, this assumption is also used in some engineering applications \cite{CS17-incompressible-2phase-flow,P77-Reservoir-Simulation,W67-chemical}. In particular, for petroleum engineering, this may be used in a gas cap drive reservoir or when the reservoir pressure drops below the bubble-point pressure \cite[Chapter 7]{IHMMA16-advanced-petroleum-reservoir}. The model \eqref{eq:model} is understood in the following manner: \eqref{pressure} gives us the conservation of mass for the total fluid (mixture of oil and solvents), whereas \eqref{concentration} gives us the conservation of mass of the injected solvents. This captures the physical phenomenon of two or more components (oil and solvents) flowing along a single phase, for which each of the components have different concentration; the derivation and more interpretation of this model are given in \cite[Chapter 2]{E83-Mathematics-Reservoir}.
	
	The functions $q^{+}$ and $q^{-}$ represent the injection and production wells respectively, and $\mathbf{D}(\mathbf{x},\mathbf{u})$ denotes the diffusion tensor
	\begin{equation} \nonumber
	\mathbf{D}(\mathbf{x},\mathbf{u}) = \phi(\mathbf{x})\left[d_{m}\mathbf{I}+d_{l}|\mathbf{u}|\proj(\mathbf{u})+d_{t}|\mathbf{u}|\left(\mathbf{I}-\proj(\mathbf{u})\right)\right]
	\mbox{ with }\proj(\mathbf{u}) = \left(\dfrac{u_{i}u_{j}}{|\mathbf{u}|^{2}}\right)_{i,j}.
	\end{equation}
	Here, $d_{m}$ is the molecular diffusion coefficient, $d_{l}$ and $d_{t}$ are the longitudinal and transverse dispersion coefficients respectively, and $\proj(\mathbf{u})$ is the projection matrix along the direction of $\mathbf{u}$.
	Also, $\mu(c)=\mu(0)[(1-c)+M^{1/4}c]^{-4}$ is the viscosity of the fluid mixture, where $M=\mu(0)/\mu(1)$ is the mobility ratio of the two fluids.
	As usually considered in numerical tests, we consider no-flow boundary conditions, and
	zero initial conditions for the concentration:
	\begin{equation}
	\mathbf{u} \cdot \mathbf{n} = (\mathbf{D}\nabla c) \cdot \mathbf{n} = 0 
	\mbox{ on }\partial\O \times [0,T]\,,\qquad
	c(\cdot,0)=0\mbox{ in $\O$}.
	\end{equation}
\end{subequations}
The pressure is fixed by imposing a zero average: 
\begin{equation} \label{zeroAve}
\int_{\Omega}p(\x,t) \d\x= 0 \qquad \forall t\in [0,T]
\end{equation}

The purpose of this work is to design and test a numerical scheme for the complete coupled model \eqref{eq:model}.
This scheme is based on an adjoint method of characteristic for the advection in \eqref{concentration}
and a finite volume method for the diffusion terms. The main contributions of our work are:
\begin{itemize}
\item Usage of the Hybrid Mimetic Mixed (HMM) method for the diffusion terms, that is applicable
on meshes with very generic geometries as encountered in real-world applications.
\item Reconstruction of a velocity field from the numerical Darcy fluxes, that preserves
the exact divergence and is locally consistent on $\RT0$ functions; this reconstructed velocity is required
to apply the characteristic method.
\item A new treatment of cells near the injection well that accounts better for the conservativity of the injected fluid.
\end{itemize}

An early version of the HMM--ELLAM was presented in \cite{CD17-HMM-ELLAM}. This version however did not include the special treatment of cells near the injection well, or the new velocity reconstruction. We show here that the combination of these novel elements lead to improved numerical results compared with this initial version.

A number of various numerical methods have been used to approximate the solutions of \eqref{eq:model},
from finite difference techniques \cite{pe66,douglas83}, to finite element schemes \cite{dew83,ew80}, to
discontinuous Galerkin methods \cite{bjm09,rivwalk11}, to finite volume methods \cite{CD-07,ckm15}.
Closer to the focus of this article are the Modified Method of Characteristics (MMOC) and
the Eulerian-Lagrangian Localized Adjoint Method (ELLAM). These methods, designed to deal with the
advective terms, have been applied in conjunction with mixed finite elements (for the pressure equation) and
conforming finite elements (for the diffusion terms in the concentration equation) in \cite{cejs02a,erw84,WLELQ-00},
among others. The combination with FE methods severely restricts the cell geometries that can be managed
with such methods. On the contrary, recent finite volume (FV) methods can accommodate very generic
mesh geometries, see the review \cite{D14-FVschemes} and references therein. Among those, the
hybrid mimetic mixed (HMM) method of \cite{dro-10-uni} is a family of numerical schemes for diffusion equations,
applicable on generic meshes, which gathers three separately developed numerical methods:
hybrid finite volumes \cite{egh10}, mixed-hybrid mimetic finite differences \cite{bls05mimetic}, and mixed finite volume \cite{de06mixed}. The HMM has been adapted in \cite{CD-07} to the model \eqref{eq:model}, using an upwind discretisation of
the advective term. The drawback of such a discretisation is an additional numerical diffusion, which leads
to a widening of the transition layer between the regions $c\approx 1$ and $c\approx 0$.

\medskip

In this work, we propose to combine the HMM method, for its usability on generic grids, with
an ELLAM for the advection, for its reduced numerical diffusion compared to the upwinding
method. The HMM method being a FV scheme, using it to discretise the pressure equation \eqref{pressure}
naturally produces numerical fluxes of the Darcy velocity across the mesh faces. The ELLAM however
requires a complete velocity field, to track points along it (see equation \eqref{charac}). To avoid creating artificial
wells, this velocity field should have the exact same divergence as prescribed by the numerical
Darcy fluxes. To reconstruct such a divergence-preserving field starting from the Darcy fluxes,
we use ideas of \cite{KR03-kuznetsov-repin}: the cells are split in simplices (triangles in 2D)
and an $\RT0$ velocity field is reconstructed on these simplices such that its divergence on each
simplex is equal to the balance of the fluxes on the cell (discrete divergence of the Darcy velocity
in the cell). The corresponding system is under-determined and the original method of \cite{KR03-kuznetsov-repin}
suggests to select the solution with a certain minimal norm. We propose an alternative approach
by using, in the case of dimension 2, a complementary equation which is consistent on functions
that are globally $\RT0$ in the entire cell. In other words, if, on a given cell, the numerical Darcy fluxes
obtained by solving \eqref{pressure} are the fluxes of a field of the form $\mathbf{a}+b\x$ on the cell,
then the reconstructed velocity is exactly equal to that field in the cell. This construction
is performed in dimension $d=2$ here, but already shows promising results; the extension to the
three-dimensional case being the purpose of a future work.

As most low order FV methods, the HMM method approximates the concentration by piecewise constant
functions in the cells. The test functions are therefore also piecewise constant, and the ELLAM requires
to track them along the reconstructed velocity field. This boils down to tracking each cell along
the velocity field. This tracking is easier to perform than that of FE basis functions, which are intrinsically more complicated than piecewise constant functions; this is another advantage, in the ELLAM framework, of FV methods over FE methods.

Each cell is tracked by transporting its vertices and edge midpoints (and possibly more points along the edge, depending on the cell regularity) by the reconstructed Darcy velocity; the tracked cell is then approximated by the polygon formed by these tracked points. For cells tracked into the injection well, to mitigate the steepness of the source terms there, we use a different strategy, based on a trace forward algorithm
inspired by \cite{AW11-stability-monotonicity-implementation}. The original algorithm of \cite{AW11-stability-monotonicity-implementation} 
however sometimes lead to an excessive numerical injection of fluid. We therefore modify this specific treatment of
cells near the injection well to eliminate this excess. Aside from this, the integral of the source term should be treated properly; otherwise, mass conservation will not be satisfied. To this end, we use a weighted trapezoidal rule, as described in \cite{AH06}. 

\medskip

The paper is organised as follows.
The HMM--ELLAM scheme is presented in Section \ref{sec:scheme},
starting with the HMM scheme for the pressure equation \eqref{pressure}. We then proceed, in section \ref{numConc}, by presenting an ELLAM scheme for the advective component of concentration equation \eqref{concentration}, whilst still discretising the diffusive component using an HMM scheme. Discretisation of the source term should be treated carefully, and so it will be presented separately in section \ref{numSource}. In section \ref{KRDarcyU}, we present the reconstruction of the velocity, first by using the original method of \cite{KR03-kuznetsov-repin} involving the minimisation of some norm, then by presenting our approach based on a consistent complementary equation. Numerical results are presented, on various types of meshes, in section \ref{numResults}. These results show that the reconstructed velocity based on the consistent complementary equation behaves better, on distorted meshes, than the velocity chosen to have a minimal norm. We also provide qualitative and quantitative comparisons with the results obtained from MFV--upwind and MFEM--ELLAM schemes, outlining the positive and negative qualities  of the numerical solutions obtained from each scheme. A few comments about the proper implementation of ELLAM are also made in Section \ref{numResults}, such as the proper number of points to take for tracking each cell, and the impact of choosing the proper quadrature rule for integrating the source term. 

\section{The HMM--ELLAM}\label{sec:scheme}

The numerical solution of \eqref{eq:model} is usually approximated by using a two-step
process. Starting from a known value $c^{(n)}$ of $c$ at time level $n$ (for $n=0$, 
$c^{(0)}=0$), a numerical solution $p^{(n+1)}$ for $p$ at time level $n+1$ is computed by approximating \eqref{pressure} with $c=c^{(n)}$. This computation
also provides an approximation $\darcyU^{(n+1)}$ of the Darcy velocity at time level $n+1$, and possibly of secondary quantities (e.g., fluxes). The concentration $c^{(n+1)}$ at time level $n+1$ is then computed by approximating \eqref{concentration} by using $\darcyU=\darcyU^{(n+1)}$ and the aforementioned secondary quantities.

 For our discretisation, we will consider polytopal meshes as defined in \cite{DEGGH16}, in dimension $d=2$
(as explained above, this restriction is due to a special consistent reconstruction of the Darcy velocity from the numerical fluxes, whose extension to dimension 3 is the purpose of an ongoing work).
 Thus, $\mathcal T=(\mesh, \edges, \points)$ are the set of cells $K$, edges $\edge$, and points $\x_K$ (one per cell) of our mesh, respectively. The cells can
be any star-shaped polygons. For a cell $K\in\mesh$, $\edgescv\subset \edges$ denotes the set of edges of the cell $K\in\mesh$. We also denote by $|K|$ the area of a cell $K$, and $|\edge|$ the length of an edge $\edge$. As is usual, and not restrictive with respect to applications, we assume that the
permeability and porosity are constant in each cell, and we write $\mathbf{K}_K$ and $\phi_K$ their
respective values in the cell $K\in\mesh$.

\subsection{Numerical scheme for the pressure equation}
\label{numPress}

The variational formulation for \eqref{pressure} is given by 
	\begin{equation} \label{varpressure}
\int_{\Omega} \dfrac{\mathbf{K}}{\mu(c)}\grad p \cdot \grad \xi  = \int_{\Omega} q(t^{(n+1)})\xi		\,,\quad\forall \xi\in H^1(\O).
\end{equation}


There is an implicit time variable above, which is here just a parameter. The degrees of freedom of our numerical scheme are given by cell and edge values. The space of DOFs is then
\[
X_{\mathcal{T}} := \{w=((w_{\cv})_{\cv\in\mesh},(w_{\edge})_{\edge\in\edgescv}) \}.
\] 
For HMM schemes, a piecewise constant gradient is then defined on a sub-triangula\-tion of cells.
Following \cite{dro-10-uni}, if $K\in\mesh$ and $(T_{K,\edge})_{\edge\in\edgescv}$ are
the triangles defined by $\x_K$ and the edges of $K$ (see Figure \ref{fig.subd}), we set
\begin{equation}\label{def.grad} 
\begin{aligned}
&\forall w\in X_{\mathcal T}\,,\;\forall \x\in T_{K,\edge}\,,\\
&\grad_{H} w(\mathbf{x}) = \ograd_{\cv} w + \dfrac{\sqrt{2}}{\dcvedge}[w_{\edge}-w_{\cv}-\ograd_{\cv}w \cdot(\centeredge-\centercv)] \ncvedge\,,\\
&\mbox{ where }\ograd_{\cv}w=|\cv|^{-1}\sum_{\edge\in\edgescv} |\sigma|w_{\edge}\ncvedge.
\end{aligned}
\end{equation}
Note that $\ograd_\cv w$ is a linearly exact reconstruction of the gradient, that is, if $(w_{\edge})_{\edge\in\edgescv}$ interpolate an affine function $A$ at the edge midpoints, then $\ograd_\cv w=\nabla A$.
In \eqref{def.grad}, $\dcvedge$ is the orthogonal distance between $\centercv$ and $\edge$, and
the second term can be seen as a stabilisation of the gradient involving a discrete 2nd order Taylor expansion.

Given $p^{(n+1)}\in X_{\mathcal T}$,
the discrete Darcy fluxes $(F_{K,\edge}^{(n+1)}) _{K\in\mesh,\,\edge\in\edgescv}$,
approximations of $-\int_\edge (\mathbf{K}/\mu(c))\grad p \cdot \ncvedge$, are then defined by
\begin{equation}\label{def.fluxes.p}
\begin{aligned}
&\forall K\in\mesh\,,\;\forall v \in X_{\mathcal{T}}\,,\\
&\sum_{\edge \in \edgescv} F_{K,\sigma}^{(n+1)} (v_{K}-v_{\sigma}) = \int_{K} \dfrac{\mathbf{K}_K}{\mu(c^{(n)}_K)} \nabla_{H} p^{(n+1)}(\mathbf{x}) \cdot \nabla_{H} \xi(\mathbf{x}) d\mathbf{x}
\end{aligned}
\end{equation}
where $c^n_K$ is a cell-centred approximate concentration at the previous time step, which is
available since $c$ is also approximated in $X_{\mathcal T}$.

Fundamental in the formulation of finite volume schemes is the balance and conservativity of fluxes \cite{D14-FVschemes}, which are given by the equations
\begin{subequations}\label{scheme:pressure}
\begin{equation}\label{fluxBal}
\sum_{\edge\in\mathcal{E}_{K}} F^{(n+1)}_{\cv,\edge} = \int_{K} q(t^{(n+1)})\quad\mbox{ for all $K\in\mesh$},
\end{equation}
\begin{equation}\label{fluxCons}
\begin{aligned}
F^{(n+1)}_{K,\sigma}+F^{(n+1)}_{L,\sigma}=0 {}&\quad \mbox{ for all edges $\edge$ between two different cells $K$ and $L$,}\\
F^{(n+1)}_{K,\sigma}=0{}&\quad\mbox{ for all edges $\edge$ of $K$ lying on $\partial\O$}.
\end{aligned}
\end{equation}
As expected, the normalisation \eqref{zeroAve} of the pressure needs to be separately imposed:
\begin{equation}
\sum_{K\in\mesh}|K|p^{n+1}_K=0.
\end{equation}
\end{subequations}
With the definitions \eqref{def.grad}--\eqref{def.fluxes.p}, the system \eqref{scheme:pressure}
provides an approximation $p^{(n+1)}\in X_{\mathcal T}$ of $p$ at time $t^{(n+1)}$, as well as approximate Darcy fluxes $(F_{K,\edge}^{(n+1)})_{K\in\mesh\,,\;\edge\in\edgescv}$ at the same time.

\subsection{Numerical scheme for the concentration equation} \label{numConc}
For the concentration equation \eqref{concentration}, we have the following variational formulation:
\begin{equation}\label{conc.weak}
\int_{t^{(n)}}^{t^{(n+1)}}\!\!\!\!\int_{\Omega} \left(\phi \dfrac{\partial (c\psi)}{\partial t}+ \mathbf{D}(\x,\mathbf{u})\nabla c \cdot \nabla \psi\right)-\int_{t^{n}}^{t^{n+1}}\!\!\!\!\int_{\Omega} c(\phi \psi_{t} +\darcyU \cdot \nabla \psi)= \int_{t^{n}}^{t^{n+1}}\!\!\!\!\int_{\Omega} q_{c}\psi.
\end{equation}
A common feature of ELLAM schemes is taking test functions $\psi$ such that $ \phi \psi_{t}+\mathbf{u}^{n+1}\cdot \nabla \psi = 0$ in the
sense of distributions. This equation is completed by a terminal condition at
$t=t^{(n+1)}$, whose nature depends on the chosen approximation space for the concentration.
Here, the concentration is approximated by piecewise constant functions on the mesh,
and we therefore impose $\psi(t^{(n+1)},\cdot)=\mathbf{1}_K$, where $K$ is a generic cell
and $\mathbf{1}_{K}$ is the characteristic function of $K$. We will therefore denote $\psi=\psi_K$.

Computing these tests functions $\psi_K$ requires us to use the characteristics of the 
advective component of \eqref{conc.weak}, that is the solution, for each
$\mathbf{x}\in\O$, to the ODEs
\begin{equation}\label{charac}
\dfrac{d\widehat{\mathbf{x}}}{dt}(t) = \dfrac{\mathbf{u}^{(n+1)}(\widehat{\mathbf{x}}(t),t)}{\phi(\widehat{\mathbf{x}}(t))},  \qquad \widehat{\mathbf{x}}(t^{(n+1)})=\mathbf{x}.
\end{equation}
 This leads to
$\psi_K(t^{(n)},\mathbf{x})=\mathbf{1}_K(\widehat{\mathbf{x}})=\mathbf{1}_{\widehat{K}}(\mathbf{x})$,
where $\widehat{K}$ is $K$ tracked back from $t^{(n+1)}$ to  $t^{(n)}$
through \eqref{charac}:
\[
\widehat{K}=\{\x(t^{(n)})\,:\, \x(t^{(n+1)})\in K\}
\]
Of course, solving this characteristic equation first requires access to a velocity $\mathbf{u}^{n+1}$.
We describe in Section \ref{KRDarcyU} how to reconstruct, from the discrete Darcy fluxes obtained
in Section \ref{numPress}, such a velocity.

The diffusive term is discretised using the HMM method and an implicit time stepping
scheme. Fluxes $D_{K,\edge}^{(n+1)}$ are defined as for the pressure equation \eqref{pressure}, using a
piecewise constant Darcy velocity $\mathbf{U}^{(n+1)}$ constructed from the consistent part of the reconstructed pressure gradient \eqref{def.grad} and the viscosity at $c^{(n)}$, that is
\[
\forall K\in\mesh\,,\;\mathbf{U}^{(n+1)}_{|K}=\dfrac{\mathbf{K}_K}{\mu(c^{(n)}_K)}\ograd_K p^{(n+1)}.
\]
Hence, if $c^{(n+1)}\in X_{\mathcal{T}}$ is our discrete concentration sought at time $t^{(n+1)}$,
the fluxes are given by 
\begin{multline*}
\forall K\in\mesh\,,\;\forall v \in X_{\mathcal{T}}\,,\\
\sum_{\edge \in \edgescv} D_{K,\sigma}^{(n+1)} (v_{K}-v_{\sigma}) = \int_{K} \mathbf{D}(\x,\mathbf{U}^{(n+1)}_K)\nabla_H c^{(n+1)}(\x)
\cdot \nabla_H v(\x) d\x.
\end{multline*}
Note that, on the cell $K$, $\mathbf{D}(\cdot,\mathbf{U}^{(n+1)}_K)$ is actually constant.

Without discretising the source term yet (this will be discussed in Section \ref{numSource}), this leads to the following scheme for \eqref{conc.weak}:
\begin{equation}\nonumber
\forall K\in\mesh\,,\; \int_{K} \phi c^{(n+1)} -  \int_{\widehat{K}} \phi c^{(n)} +\Delta t\sum_{\sigma\in\mathcal{E}_{K}} D_{K,\sigma}^{(n+1)}= \int_{t^{n}}^{t^{n+1}}\hspace*{-0.8em}\int_\O q_{c^{(n+1)}}\psi_K.
\end{equation}

The integrals are then computed by writing
\[
\int_{K} \phi c^{(n+1)} = \phi_K |K| c^{(n+1)}_{K}\mbox{ and }\int_{\widehat{K}} \phi c^{(n)}=\sum_{M\in\mathcal{M}} \phi_M |\widehat{K}\cap M| c_{M}^{(n)},
\]
which leads to the following discretised form of the concentration equation
\begin{multline}\label{scheme:c}
\forall K\in\mesh\,,\;\\
\phi_K |K| c_{K}^{(n+1)} +\Delta t\sum_{\sigma\in\mathcal{E}_{K}} D_{K,\sigma}^{(n+1)}
=\sum_{M\in\mathcal{M}} \phi_M |\widehat{K}\cap M| c_{M}^{(n)} +\int_{t^{n}}^{t^{n+1}}\hspace*{-0.8em}\int_\O q_{c^{(n+1)}}\psi_K.
\end{multline}

As with the pressure equation, we also have the conservativity of fluxes and no flow boundary conditions given by 
\begin{equation}\label{scheme:c.BC}
\begin{aligned}
D_{K,\sigma}^{(n+1)}+D_{L,\sigma}^{(n+1)}=0 {}&\quad \mbox{ for all edge $\edge$ between two cells $K\not= L$,}\\
D_{K,\sigma}^{(n+1)}=0{}&\quad\mbox{ for all edge $\edge$ of $K$ lying on $\partial\O$}.
\end{aligned}
\end{equation}

\begin{remark}[Order of accuracy of the scheme]
	Under regularity assumptions on the mesh and the solutions, error estimates in $H^1$ norm for an HMM scheme for an anisotropic diffusion equation similar to \eqref{pressure} are of order $O(\Delta x)$ \cite[Theorem 3.31 and Propositions 13.15, 13.16]{DEGGH16}. Error estimates for an $\mathbb{RT}_0$--ELLAM scheme for an advection--diffusion equation similar to \eqref{concentration} are obtained in \cite{W05-ELLAM-error-estimates}, and are of order $O(\Delta x +\Delta t)$. However, for the complete Peaceman model \eqref{eq:model}, no exact error estimate may be established due to the fact that most of the regularity assumptions violate the physical interpretation of the problem. Nevertheless, convergence of the scheme can still be established, see \cite{CDL17-convergence-ELLAM}.
\end{remark}

\begin{remark}[Splitting technique]
The HMM--ELLAM for the concentration equation may also be viewed as an operator splitting technique, which is the usual approach when implementing ELLAM schemes \cite{AH06, RC-02-overview}. 
Introduce the values $\tilde{c}^{(n+1)}=(\tilde{c}^{(n+1)})_{K\in\mesh}$ defined by
\begin{equation}\label{scheme:c.ELLAM}
\forall K\in\mesh\,,\quad
\phi_K |K| \tilde{c}_{K}^{(n+1)} 
=\sum_{M\in\mathcal{M}} \phi_M |\widehat{K}\cap M| c_{M}^{(n)} +\int_{t^{n}}^{t^{n+1}}\hspace*{-0.8em}\int_\O q_{c^{(n+1)}}\psi_K.
\end{equation}
The approximate values $\tilde{c}^{(n+1)}$ correspond to applying one time step of the ELLAM
technique on the pure advection equation $\phi \partial_t c + \nabla \cdot (\mathbf{u}c) = q_{c}$. Re-starting from $\tilde{c}^{(n+1)}$, construct now $c^{(n+1)}\in X_{\mathcal T}$
such that
\begin{equation}\label{scheme:c.DIFF}
\forall K\in\mesh\,,\quad
\phi_K |K| (c_{K}^{(n+1)}-\tilde{c}^{(n+1)}_K) +\Delta t\sum_{\sigma\in\mathcal{E}_{K}} D_{K,\sigma}^{(n+1)}
=0
\end{equation}
with boundary conditions \eqref{scheme:c.BC}. In other words, $c^{(n+1)}$ is obtained from
$\tilde{c}^{(n+1)}$ by performing one time step of the HMM scheme for the pure
diffusion equation $\phi \partial_t c -\grad\cdot(\mathbf{D}(\mathbf{x},\mathbf{u})\nabla c) = 0$. Adding together \eqref{scheme:c.ELLAM} and \eqref{scheme:c.DIFF}, we see
that $c^{(n+1)}$ is the solution to the complete HMM--ELLAM scheme \eqref{scheme:c}.
\end{remark}

\subsection{Reconstruction of Darcy velocities} \label{KRDarcyU}

In order to compute the characteristics in \eqref{charac}, two important features of the velocity need to be accounted for: the no-flow
boundary conditions $\mathbf{u}^{(n+1)}\cdot\mathbf{n}=0$ on $\partial\O$, which
ensures that the solutions to \eqref{charac} do not exit the computational domain,
and the preservation of the divergence in \eqref{pressure}, to avoid creating
regions with artificial wells or sinks (which lead to non-physical flows), as was mentioned and implemented in \cite{AH10-Fully-Conservative-ELLAM,AW11-stability-monotonicity-implementation}.
We present this construction in dimension $d=2$, the extension to dimension $d=3$ being the purpose of a
future work.

Preserving these features is done by using a technique similar to that of \cite{KR03-kuznetsov-repin}. Each cell
$K\in\mesh$ is split into
triangles (see Fig. \ref{fig.subd}) by choosing an interior point $\x_{K}$ and connecting it to the vertices of cell $K$.
These vertices are denoted by $(\v_1,\ldots,\v_r)$, in counter clockwise order, and with the convention that $\v_0=\v_r$.
We let $\edge_i^*$ be the internal edge $[\v_i,\x_K]$ of the triangular subdivision of $K$, and $\edge_i$ be the
edge $[\v_{i-1},\v_i]$ of $K$.
  A clockwise oriented interior flux $F_{\sigma_i^*}$ is then computed on each internal edge $\edge_i^*$ of this subdivision. $\mathbf{u}^{(n+1)}$ is the $\RT0$ function reconstructed from these
fluxes on the triangular subdivision. 

This reconstruction belongs to $H_\div(\O)$ and,
to ensure that its divergence is identical to the discrete
divergence computed from the Darcy fluxes, the
internal fluxes $F_{\sigma_i^*}$ are constructed so that their balance (along
with the fluxes $F_{K,\sigma_i}^{(n+1)}$ at the boundary of $K$) in each
triangle corresponds to the balance over the cell $K$:
\begin{equation} \label{locDiv2D}
\dfrac{1}{|T_{K,\edge_i}|}\left(F_{\edge_{i}^*}-F_{\edge_{i-1}^*} + F_{K,\edge_i}^{(n+1)}\right) = \dfrac{1}{|K|} \sum_{\edge\in\edgescv}F_{K,\sigma}^{(n+1)}, \quad \text{ for } i=1,2,\dots,r.
\end{equation}

\begin{figure}[h]
	\caption{Triangulation of a generic cell.}
	\begin{center}
		\input{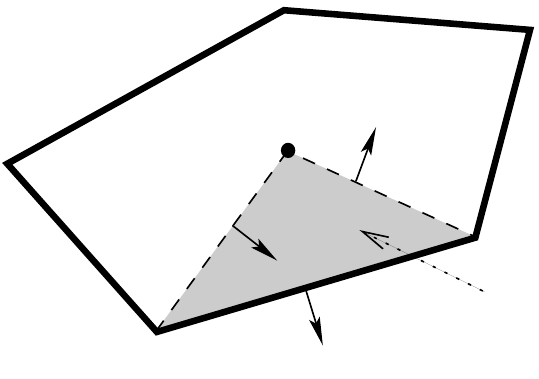_t}
	\end{center}
	\label{fig.subd}
\end{figure}

\subsubsection{KR velocity}\label{sec:KRvel}

Although \eqref{locDiv2D} gives us $r$ equations in $r$ unknowns, this system of equations is underdetermined. More specifically, its rank is $r-1$ and it therefore has an infinite number of solutions. The original method
of Kuznetzov and Repin \cite{KR03-kuznetsov-repin} consists in selecting the solution that has a minimal $l^2$ norm. The velocities reconstructed from these fluxes will be referred to as \emph{KR velocities}. 

Despite its applicability to any space dimension, this choice is questionable from the physical point of view. If, say, the
external fluxes $(F_{K,\edge}^{(n+1)})_{\edge\in\edgescv}$ correspond to those of a constant velocity in the cell, this reconstruction does not necessarily recover this natural velocity.

\subsubsection{C velocity}\label{sec:Cvel}

We therefore propose a different approach, which consists in adding to the set of equations \eqref{locDiv2D} a
complementary equation which selects a unique solution. We form this complementary equation by ensuring that,
as \eqref{locDiv2D}, it is consistent for internal and external fluxes coming from simple
velocities -- here, not just constant velocities in the cell, but $\RT0$ velocities in the cell.

\begin{lemma}[Consistency condition] \label{ConsistencyG}
	Let $\x_{K}\in K$, $(\v_{1},\v_{2},\dots, \v_{m})$ be some vertices of cell $K$, and $(\alpha_1,\ldots,\alpha_m)\in\R^m$
be such that
	\begin{equation}\nonumber
	\sum_{i=1}^{m} \alpha_{i} \v_{i} = \x_{K} \quad \text{and} \quad \sum_{i=1}^{m}\alpha_{i}=1.
	\end{equation}
Let $\darcyU$ be an $\RT0$ function over the cell $K$, that is, $\darcyU(\x)=a\x+\mathbf{b}$ for
some $a\in\R$ and $\mathbf{b}\in\R^d$. Define
$
F_{\edge_{i}^{*}} =\int_{\edge_{i}^{*}} \darcyU \cdot \bfn_{\edge_{i}^{*}}
$
the flux of $\darcyU$ through the internal edge $\edge_i^*$, for all $i=1,\ldots,m$ (here,
$\bfn_{\edge_{i}^{*}}$ is the clockwise normal to $\edge_i^*$,
as in Figure \ref{fig.subd}).
Then 
	\begin{equation}\nonumber
	\sum_{i=1}^{m} \alpha_{i}F_{\edge_{i}^{*}}=0.
	\end{equation} 
\end{lemma}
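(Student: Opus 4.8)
The plan is to compute the flux $F_{\edge_i^*}=\int_{\edge_i^*}\darcyU\cdot\bfn_{\edge_i^*}$ explicitly as a function of the endpoints $\v_i$ and $\x_K$ of the internal edge, using the special affine structure $\darcyU(\x)=a\x+\mathbf{b}$, and then to show that the weighted sum $\sum_i\alpha_i F_{\edge_i^*}$ collapses to zero by invoking the two barycentric relations $\sum_i\alpha_i\v_i=\x_K$ and $\sum_i\alpha_i=1$. The key observation is that, for an $\RT0$ field, the flux through a straight segment has a clean closed form: parametrising $\edge_i^*=[\v_i,\x_K]$ and integrating, one finds that $\int_{\edge_i^*}\darcyU\cdot\bfn_{\edge_i^*}$ depends on $\darcyU$ evaluated at the \emph{midpoint} $\tfrac12(\v_i+\x_K)$ of the segment, times the (signed) length, since $\darcyU$ is affine. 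Concretely, the flux is the midpoint value of $\darcyU$ dotted with the clockwise normal vector $\bfn_{\edge_i^*}$, whose length equals $|\edge_i^*|$; because the normal to $[\v_i,\x_K]$ is obtained by rotating $\v_i-\x_K$ by $90^\circ$, this normal is itself a linear function of the coordinates of $\v_i$ and $\x_K$.

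First I would write, for the segment from $\x_K$ to $\v_i$, the (non-normalised) clockwise normal as $R(\v_i-\x_K)$ where $R$ is the rotation by $-\pi/2$, so that $\bfn_{\edge_i^*}|\edge_i^*|=R(\v_i-\x_K)$. Then the midpoint rule gives
\[
F_{\edge_i^*}=\darcyU\!\Big(\tfrac{\v_i+\x_K}{2}\Big)\cdot R(\v_i-\x_K)
=\Big(a\,\tfrac{\v_i+\x_K}{2}+\mathbf{b}\Big)\cdot R(\v_i-\x_K).
\]
Next I would expand this bilinearly. The term $\mathbf{b}\cdot R(\v_i-\x_K)$ is linear in $\v_i$ (for fixed $\x_K$), and the quadratic piece $\tfrac{a}{2}(\v_i+\x_K)\cdot R(\v_i-\x_K)$ splits, using the antisymmetry of $R$ (so $\v\cdot R\v=0$ and $\x_K\cdot R\x_K=0$), into cross terms that are again linear in $\v_i$ up to a constant multiple of $\x_K\cdot R\v_i$. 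After collecting, $F_{\edge_i^*}$ is an affine-plus-bilinear expression in $\v_i$, and I would then form $\sum_i\alpha_i F_{\edge_i^*}$ and substitute $\sum_i\alpha_i\v_i=\x_K$ and $\sum_i\alpha_i=1$ wherever the sum hits a $\v_i$. Every surviving term should then reduce to something proportional to $\x_K\cdot R\x_K=0$ or to $\mathbf{b}\cdot R(\x_K-\x_K)=0$, yielding the claimed identity.

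The main obstacle I anticipate is purely bookkeeping: correctly tracking the rotation $R$ together with the clockwise orientation convention of $\bfn_{\edge_i^*}$ fixed in Figure~\ref{fig.subd}, and making sure the quadratic cross terms cancel in the right way. In particular the identity $\x_K\cdot R\v_i=-\v_i\cdot R\x_K$ and the vanishing of $\v\cdot R\v$ must be used consistently, and one must verify that the $a$-dependent (genuinely $\RT0$, non-constant) part really does collapse once the barycentric constraints are applied — this is the crux that distinguishes the present construction from merely reproducing constant velocities. A clean way to avoid sign errors is to keep the normal in the un-normalised vector form $R(\v_i-\x_K)$ throughout and only interpret it geometrically at the end; everything else is elementary linear algebra.
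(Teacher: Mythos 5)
Your proposal is correct and follows essentially the same route as the paper's proof: both express the flux exactly via the midpoint rule for the affine field $\darcyU$, write the (length-scaled) normal as a $90^\circ$ rotation of $\v_i-\x_K$, exploit the antisymmetry of the rotation (the paper phrases this as the orthogonality of $\x_{\edge_i^*}-\x_K$ and $\mathrm{Rot}(\x_K-\v_i)$, which is the same identity $u\cdot Ru=0$), and conclude with the two barycentric relations. The only difference is bookkeeping: the paper adds and subtracts $a\x_K$ to factor out the $i$-independent vector $a\x_K+\mathbf{b}$ before summing, whereas you expand bilinearly and cancel term by term — both are valid and complete.
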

\begin{proof}
	Denoting by $x_{\edge_{i}^{*}}$ the center of $\edge_{i}^{*}$ and $\mathrm{Rot}$ the clockwise rotation by $\pi/2$, write
	\[
	F_{\edge_{i}^{*}} = \int_{\edge_{i}^{*}} \darcyU \cdot \bfn_{\edge_{i}^{*}} = (a \x_{\edge_{i}^{*}} + \mathbf{b}) \cdot \bfn_{\edge_{i}^{*}} |\edge_{i}^{*}| 
	= (a \x_{\edge_{i}^{*}} + \mathbf{b}) \cdot \mathrm{Rot}(\x_{K}-\v_{i}).
	\]
	Since $(\x_{\edge_i^*}-\x_K)$ is orthogonal to $\mathrm{Rot}(\x_{K}-\v_{i})$, we deduce
	\[
	\begin{aligned}
	\sum_{i=1}^{m} \alpha_{i}F_{\edge_{i}^{*}}&=  \sum_{i=1}^{m} \alpha_{i}(a \x_{\edge_{i}^{*}} + \mathbf{b}) \cdot \mathrm{Rot}(\x_{K}-\v_{i})  \\
	&=  \sum_{i=1}^{m} \alpha_{i}(a (\x_{\edge_{i}^{*}}-\x_{K})+a\x_{K} + \mathbf{b}) \cdot \mathrm{Rot}(\x_{K}-\v_{i}) \\
	&= (a\x_{K} + \mathbf{b}) \cdot\mathrm{Rot}\left(\sum_{i=1}^{m} \alpha_{i} (\x_{K}-\v_{i})\right).
	\end{aligned}
	\]
The proof is complete by noticing that $\sum_{i=1}^{m} \alpha_{i} (\x_{K}-\v_{i})=0$
by choice of the coefficients $(\alpha_i)_{i=1,\ldots,m}$. \end{proof}

This lemma tells us that by choosing 3 or more vertices of our cell $K$ in generic position, we can find an equation that closes the system \eqref{locDiv2D}. We only need one such equation, and we do not want to create a bias
in constructing it. We therefore use all vertices $(\v_1,\ldots,\v_r$) of $K$ to form this relation.
 Thus our closing equation will be given by 
\begin{equation} \label{closingEqn}
\sum_{i=1}^{r} \alpha_{i}F_{\edge_{i}^{*}}=0,
\end{equation} 
where the coefficients $(\alpha_i)_{i=1,\ldots,r}$ are related to $\x_K$ as in Lemma \ref{ConsistencyG}. Velocities reconstructed from fluxes that satisfy \eqref{locDiv2D}--\eqref{closingEqn} will be denoted as \emph{C velocities}
(`C' for `consistent').

\begin{remark}[Particular $\x_K$ and barycentric combinations]
If $\x_{K}$ is the iso-barycenter of the vertices of $K$, i.e. $\x_{K}= \frac{1}{r} \sum_{i=1}^{r} v_{i}$, then the
consistency relation is simply $\sum_{i=1}^{r} F_{\edge_{i}^{*}}=0$.
If $\x_{K}$  is the center of mass of $K$, then a consistency relation is
\[
\sum_{i=1}^{r} \dfrac{|T_{K,\edge_{i-1}}|+|T_{K,\edge_{i}}|}{2|K|} F_{\edge_{i}^{*}}=0,
\]
where $T_{K,\edge_{i-1}}$ is the triangle that shares edge $\edge_{i-1}^{*}$ with $T_{K,\edge_i}$.
\end{remark}

\medskip

The system \eqref{locDiv2D}--\eqref{closingEqn} has a unique and explicit solution.
Indeed, set 
\[
a_{i}=\dfrac{|T_{K,\edge_{i}}|}{|K|}\left(\sum_{\edge\in\edges_{K}} F_{K,\edge}^{(n+1)}\right) -F_{K,\edge_{i}}^{(n+1)}\qquad\mbox{ for $i=1,\ldots,r$}.
\]
The system \eqref{locDiv2D} is then
\[
\begin{aligned}
F_{\edge^*_{1}} &= F_{\edge^*_{r}}+a_{1} \\
F_{\edge^*_{2}} &= F_{\edge^*_{1}}+a_{2} \\
&\vdots \\
F_{\edge^*_{r-1}} &= F_{\edge^*_{r-2}}+a_{r-1} \\
\end{aligned}
\]
From these, we easily deduce that
\begin{equation} \label{explicitK}
F_{\edge^*_{k}}=F_{\edge^*_{r}}+\sum_{j=1}^{k}a_{j},\quad  k=1,2,\dots,r-1
\end{equation}
By noticing that $\sum_{i=1}^r a_i=0$, we see that \eqref{explicitK} also holds for $k=r$.
Together with the closing relation \eqref{closingEqn}, and since $\sum_{k=1}^r \alpha_k=1$, we obtain an explicit expression for $F_{\edge^*_{r}}$, given by
\begin{equation}\label{explicitN}
F_{\edge^*_{r}}=-\sum_{k=1}^{r} \left(\alpha_{k}\sum_{j=1}^{k}a_{j} \right).
\end{equation}
Equation \eqref{explicitN}, together with \eqref{explicitK}, will then give us explicit expressions for $F_{\edge^*_{k}},$  $k=1,2,\dots, r$. 

\medskip

These computations show an advantage of this method over the technique consisting in selecting
a minimal norm solution of \eqref{locDiv2D}. Here, we do not need to solve any local system, as we have
explicit expressions of the solution to \eqref{locDiv2D}--\eqref{closingEqn}. Moreover, 
the reconstructed velocity is more precise, especially on skewed meshes.
As an example, consider a  constant velocity field $V=(0,1)$. Pick a cell $K$
and define the fluxes across its boundary edges to be those of $V$. Denote then by $V_{\rm KR}$ and $V_{\rm C}$ the KR and C velocities, respectively, reconstructed as above on a triangular sub-mesh of $K$.
 
Table \ref{tab:recons} represents the relative error obtained between the exact and reconstructed velocities,
on a variety of mesh geometries.
For square cells (Cartesian mesh), both methods reconstruct the velocity accurately. However, for cells from
hexahedral (Fig. \ref{CHmeshes}, right), non-conforming, and Kershaw meshes (Fig. \ref{NKmeshes}), KR velocities noticeably deviate from the actual velocity,
by more than $30\%$ on distorted cells. On the other hand, as expected, using the consistency relation as a closure equation enables us to recover the velocity $V$ up to machine precision, regardless of the mesh. 

\begin{table}[h]
	\begin{center}
		\begin{tabular}{|c|c|c|}
			\hline
			Mesh & $\dfrac{||V-V_{\rm KR} ||}{||V||}$ & $\dfrac{||V-V_{\rm C} ||}{||V||}$ \\
			\hline
			Cartesian & 2.9038e-15  & 5.9529e-16 \\
			\hline
			Hexahedral  & 0.0371 & 5.8098e-15 \\
			\hline
			Non-conforming & 0.0348 & 6.7103e-15 \\
			\hline
			Kershaw  & 0.3151 & 4.6738e-15 \\
			\hline
		\end{tabular} 
	\end{center}
	\caption{Relative errors in velocity reconstruction.}\label{tab:recons}
\end{table}

\subsection{Approximate traceback region, and tracking points through vertices} \label{approx_traceback}

In general, we cannot get an exact representation of the region $\widehat{K}$, and thus, for each cell $K$, the traceback region $\widehat{K}$ is approximated in the following manner: we select points $(\x_i)_{i=1,\ldots,\ell_K}$
along the boundary of $K$ (at least all the vertices and edge midpoints are selected), we solve
\eqref{charac} starting from any of these points, thus getting curves $(\widehat{\x}_i)_{i=1,\ldots,\ell_K}$,
and we approximate $\widehat{K}$ by the polygon defined by the points $(\widehat{\mathbf{x}}(t^{(n)}))_{i=1,\ldots,\ell_K}$.

The reconstructed velocity $\mathbf{u}^{(n+1)}$ is an $\RT0$ function on a triangular sub-mesh.
Tracking a point through \eqref{charac} is naturally done cell-by-cell, using the
value of $\mathbf{u}^{(n+1)}$ in a cell $K$ as long as $\widehat{\x}$ stays in $K$, and then, when $\widehat{\x}$
exits $K$ to enter a cell $L$, continuing the tracking by using the value of $\mathbf{u}^{(n+1)}$ in $L$. This type of tracking, with point of exit determined by minimal time of flight, was first implemented by Pollock \cite{P88-Pollock-Tracing} on meshes characterised by orthogonal grid blocks, e.g. Cartesian meshes. Pollock's algorithm was then extended to more generic types of cells in \cite{PEB02-generalised-Pollock}, and further improved in \cite{JSDK07-improved-Pollock}. 
Because the fluxes of $\mathbf{u}^{(n+1)}$ are continuous across the edges, this tracking procedure ensures
that a point will never do a U-turn, that is, $-\mathbf{u}^{(n+1)}_{|L}$ (we use $-\mathbf{u}^{(n+1)}$ since we are tracking backwards) will not force $\widehat{\x}$ to re-enter
$K$ before even entering $L$ (this would in effect freeze $\widehat{\x}$ on the interface between $K$ and $L$). 

\medskip

During this tracking, special care must be taken with points that start or pass through a vertex.
An initial position $\x$ corresponding to a vertex is involved in several triangles, and could thus be 
initially tracked using any of the Darcy velocities in these triangles (see Fig. \ref{vertices-init}). 
To avoid non-physical initial tracking, we compute the Darcy velocities at the vertex $\x$ in each of the triangles involved with it. Picking one of these Darcy velocities at random is not acceptable, since if its opposite vector points outside the corresponding triangle, this means that $\x$ would never be tracked back inside this triangle, and that the chosen Darcy velocity is thus not the correct one.

\begin{figure}[h]
	\centering
	\includegraphics{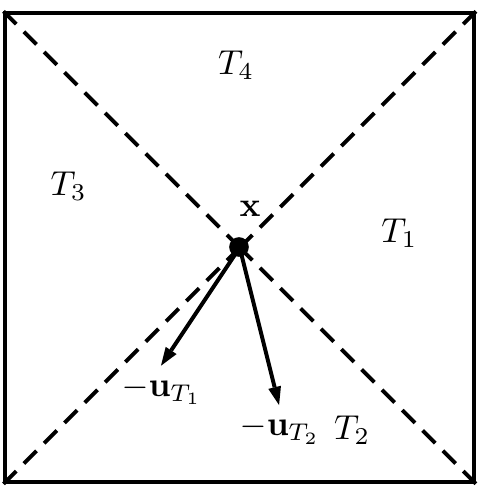}
	\caption{Choosing the proper triangle to initialize the tracking.}
	\label{vertices-init}
\end{figure}

\begin{figure}[h]
	\centering
\includegraphics{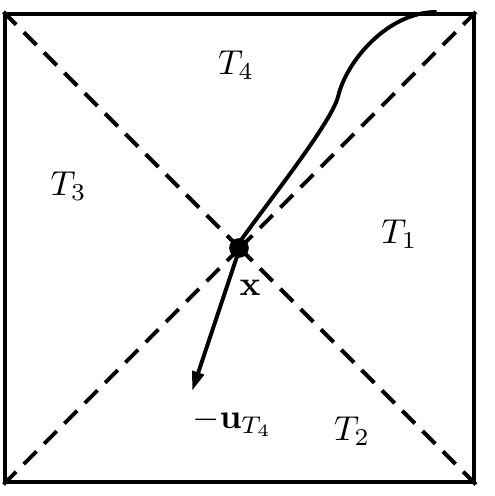}
	\caption{Choosing the proper triangle to continue the tracking. Here $\darcyU_{T_{4}}$ represents the vector that we obtain by computing the Darcy velocity on point $\x$ using the reconstructed velocity at $T_{4}$.}
	\label{tracking-vertices}
\end{figure}

We therefore loop over the triangles $T_1$, $T_2$, etc. until we reach a triangle $T_{n}$ such that $-\mathbf{u}^{(n+1)}_{|T_n}(\x)$ points inside $T_n$. In Figure 2, this corresponds to triangle $T_2$. As a note, regardless of the mesh, our numerical tests suggest that such a triangle always exists. For points that will be tracked into a vertex at some time in $[t^{(n)},t^{(n+1)})$ (see Fig. \ref{tracking-vertices}), we consider the negative of the Darcy velocity in the triangle it came from (in this case, $T_4$) and  determine which triangle it points into (in this case, it points into triangle $T_2$); the
tracking is then continued based on the reconstructed velocity in this latter triangle.

\subsection{Source term} \label{numSource}

The integral involving the source term should be treated carefully, otherwise the numerical results will feature severe overshoots, especially over the regions around the injection well. For our discretisation, we use a weighted trapezoid rule in time:
\begin{equation} \label{wtTrapRule}
\int_{t^{n}}^{t^{n+1}}\hspace*{-0.8em}\int_\O q_{c^{(n+1)}}\psi_K=w\Delta t\int_{\widehat{K}} q_{c^{(n+1)}}(t^{(n)})+(1-w)\Delta t\int_{K} q_{c^{(n+1)}}(t^{(n+1)})
\end{equation} 
where $q_{c^{(n+1)}}(s)=q^+(s)-q^-(s)c^{(n+1)}$ (that is, the source term is still fully implicit in $c$).

Note that the left and right quadrature rules correspond to $w=1$ and $w=0$, respectively. To determine the proper weight, we consider an injection cell $K=E$. We mainly focus on injection cells since this is where mass conservation might fail locally.
A proper weight that will yield mass conservation has been derived for Cartesian meshes on \cite{AH06}. This can easily be generalised for arbitrary meshes, and is given by
\begin{equation} \label{coeffWtTrapRule}
 w=\dfrac{1}{1-e^{-\alpha}}-\dfrac{1}{\alpha}, \quad \text{where} \quad \alpha= \dfrac{\int_{E} q(t^{(n+1)})}{\int_{E} \phi}\Delta t.
\end{equation}

Then, for each cell $K$ (injection or not), the integral of the source term is computed using the weighted trapezoid rule \eqref{wtTrapRule}, where $w$ is obtained as in \eqref{coeffWtTrapRule}, for some $E$ related to $K$ -- see below. We treat the computation of the integral over $\widehat{K}$ in the right hand side of \eqref{wtTrapRule} in different manners, depending on whether the cell $K$ is 
\begin{enumerate}
	\item[i)] an injection cell,
	\item[ii)] a cell tracked back into an injection cell (but not an injection cell itself),
	\item[iii)] or a cell that does not track back into an injection cell.
\end{enumerate}

i) If the cell $K$ is an injection cell $E$, then it tracks back entirely into itself. Hence, over
the entire interval $[t^{(n)},t^{(n+1)}]$, $\nabla\cdot \mathbf{u}^{(n+1)}(\widehat{x})=\frac{1}{|E|}\int_E q(t^{(n+1)})$ and
thus we use, through Liouville's theorem, an exact computation
\begin{equation}\label{inj_Liouville}
\int_{\widehat{K}} q_{c^{(n+1)}}(t^{(n)}) =  e^{-\alpha}\int_{K} q_{c^{(n+1)}}(t^{(n+1)})
\end{equation}
where $\alpha$ is given by \eqref{coeffWtTrapRule}.

ii) If the cell $K$ is not an injection cell, but is tracked back (at least partially) into an injection cell $E$, then we use a forward tracking algorithm similar to that described in \cite{AW11-stability-monotonicity-implementation}.  For these regions, we compute the \emph{trace forward} region $\widetilde{E}$ of $E$, in which we solve the characteristics \eqref{charac} with initial condition $\widehat{\mathbf{x}}(t^{(n)})=\x$ instead. The trace forward region $\widetilde{E}$ is then approximated using polygons in a similar manner as the trace back regions in Section \ref{approx_traceback}. The integral of the first term on the right hand of \eqref{wtTrapRule} is then approximated by
\begin{equation} \nonumber
\int_{\widehat{K}} q_{c^{(n+1)}} \approx \dfrac{|K\cap(\widetilde{E} \setminus E)|}{| \widetilde{E} \setminus E |} (1-e^{-\alpha}) \int_{E} q_{c^{(n+1)}}.
\end{equation} 
 In physical terms, this means that the volume injected from the well $E$ is transported into each of the cells $K$ proportionally. Note that, on the contrary to \cite{AW11-stability-monotonicity-implementation}, only a fraction $(1-e^{-\alpha})$ of $\int_{E} q_{c^{(n+1)}}$ is being spread in the cells $K$ around $E$, since a fraction $e^{-\alpha}$ of $\int_{E} q_{c^{(n+1)}}$ has already been allocated to $E$, as can be seen in \eqref{inj_Liouville}.

iii) Finally, if a cell $K$ does not track back into an injection cell, then the integral
$\int_{\widehat{K}} q_{c^{(n+1)}}$ will be computed using the (approximate) trace back regions
as described in Section \ref{numConc}. Actually, in that situation, either:
\begin{itemize}
\item $K$ is not a production cell, $\widehat{K}$ is disjoint from injection cells
and production cells (due to $-\mathbf{u}^{(n+1)}$ pointing outside production cells).
Therefore, the integrals in \eqref{wtTrapRule} are both equal to zero.
\item or $K$ is a production cell, in which case, by nature of the Darcy flow, it
is expected that $K \subset \widehat{K}$, so both integrals in \eqref{wtTrapRule}
are equal and the value of $w$ is irrelevant.
\end{itemize}

\section{Numerical results} \label{numResults}

We perform numerical simulations under the following standard test case (see, e.g., \cite{WLELQ-00}):
\begin{enumerate}
	\item $\O=(0,1000) \times (0,1000) \text{ ft}^{2}$, 
	\item injection well at $(1000,1000)$ and production well at $(0,0)$, both with flow rate of $30 \text{ft}^{2}/\text{day}$,
	\item  constant porosity $\phi=0.1$ and constant permeability tensor $\mathbf{K}=80\mathbf{I}$ md,
	\item oil viscosity $\mu(0)=1.0$ cp and mobility ratio $M=41$,
	\item $\phi d_{m}=0.0 \text{ft}^{2}/\text{day}$, $\phi d_{l}=5.0\text{ft}$, and $\phi d_{t}=0.5 \text{ft}$
\end{enumerate}

For the time discretisation, we take a time step of $\Delta t =36$ days. These will be simulated on Cartesian type meshes (square cells of dimension $62.5 \times 62.5$ ft), hexahedral meshes (see Fig.\ref{CHmeshes}), non-conforming meshes, and finally on Kershaw type meshes, as described in \cite{HH08}  (see Fig. \ref{NKmeshes}). 

\begin{figure}[h]
	\centering
	\begin{tabular}{c@{\hspace*{2em}}c}
		\includegraphics[width=0.4\textwidth]{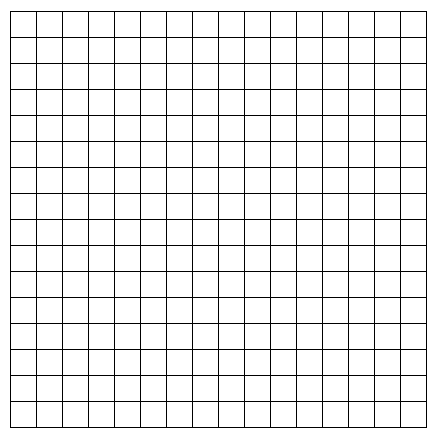} & 		\includegraphics[width=0.4\textwidth]{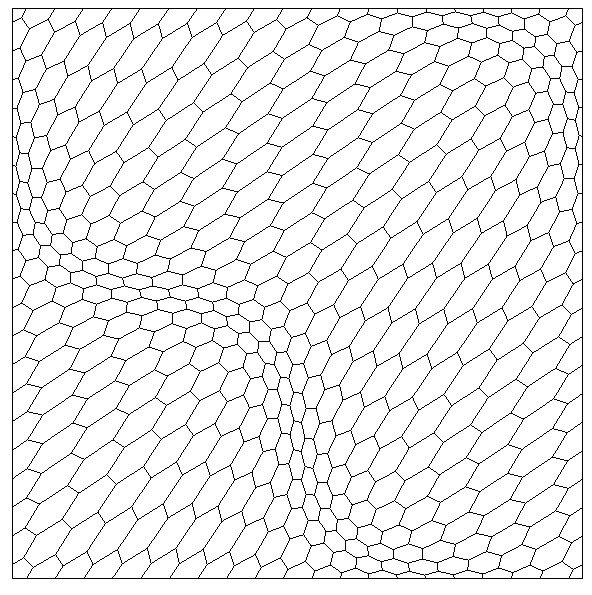}\\
	\end{tabular}
	\caption{ Mesh types(left: Cartesian ; right: hexahedral).}
	\label{CHmeshes}
\end{figure}
\begin{figure}[h]
	\centering
	\begin{tabular}{c@{\hspace*{2em}}c}
		\includegraphics[width=0.4\textwidth]{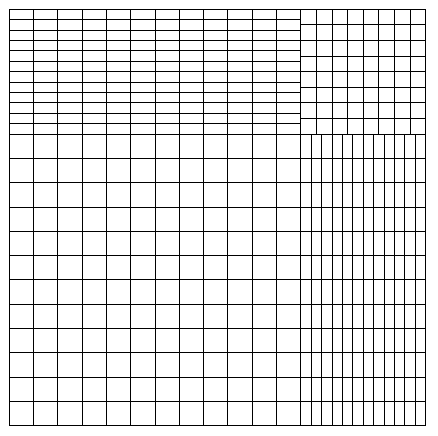} & 		\includegraphics[width=0.4\textwidth]{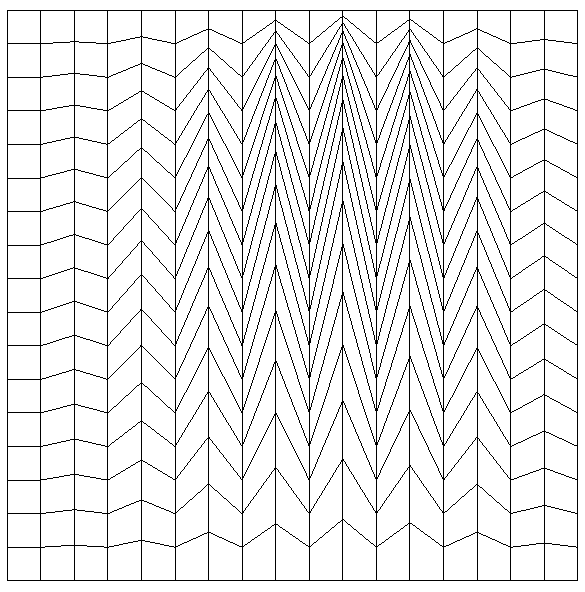}\\
	\end{tabular}
	\caption{ Mesh types(left: non-conforming ; right: Kershaw).}
	\label{NKmeshes}
\end{figure}

\subsection{Effect of the quadrature rule}

The following simulations are based on KR velocities
(see Section \ref{sec:KRvel}). Figure \ref{CLR} shows the numerical solution for the concentration at $t=10$ years on a Cartesian mesh using the left and the right rule, respectively. These results support the observations made in \cite{CD17-HMM-ELLAM}, i.e., that the left and right hand quadrature rules provide severe underestimates and overshoots, respectively, at the injection well, and are thus not good choices.

\begin{figure}[h]
	\centering
	\begin{tabular}{c@{\hspace*{2em}}c}
		\includegraphics[width=0.45\textwidth]{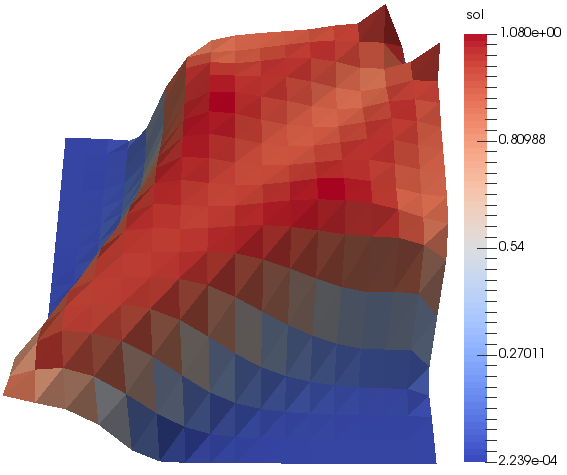} & 		\includegraphics[width=0.45\textwidth]{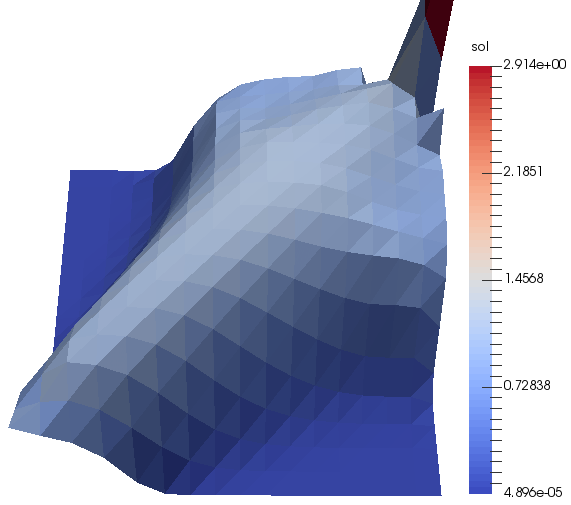}\\
	\end{tabular}
		\caption{Cartesian mesh, $t=10$ years, KR velocities (left: left rule for source terms; right: right rule for source terms).}
		\label{CLR}
\end{figure}


Figure \ref{AC310_l2} (left) shows the numerical solution for the concentration using the proper weight for the trapezoidal rule, and computation of the integrals as described in Section \ref{numSource}. This presents a significant improvement from the results obtained through the right and left rule. 

The overshoot seen in this figure is at worst around $7\%$, which is
commensurate with (or even less than) overshoots already noticed in other
other characteristic methods in the absence of specific post-processing \cite{ERW83}.

\subsection{Comparison with forward tracking in \cite{AW11-stability-monotonicity-implementation}}

We now compare our numerical results to the original algorithm of \cite{AW11-stability-monotonicity-implementation}.
Instead of implementing i) and ii) as described in Section \ref{numSource},
\begin{itemize}
\item For an injection cell $E$, $c_E^{(n+1)}$ is fixed at $1$, as this is the concentration of the injected solvent.
\item for cells $K$ tracked back (at least partially) into an injection cell $E$, the following approximation
is used:
	\begin{equation} \label{A_dist}
	\int_{\widehat{K}} q_{c^{(n+1)}} \approx \dfrac{|K\cap(\widetilde{E} \setminus E)|}{| \widetilde{E} \setminus E |} \int_{E} q_{c^{(n+1)}},
	\end{equation}
where $\widetilde{E}$ is the trace forward region of $E$.
\end{itemize}

For some discretisation parameters, this implementation might lead to degraded results due to its physical implications. Setting  $c_E^{(n+1)}=1$ corresponds to distributing a fraction of $\int_{E} q_{c^{(n+1)}}$ into injection cells $E$. In this instance, a good estimate would be given by \eqref{inj_Liouville}. However, computation of the integral for cells $K$ that track back into an injection cell $E$ by \eqref{A_dist} means that we spread the whole of $\int_{E} q_{c^{(n+1)}}$ onto the cells $K$. This then means that at time level $n+1$, an excessive amount of $e^{-\alpha}\int_{E} q_{c^{(n+1)}}$ of fluid has been injected in the cells around $E$. If $\Delta t$ is large enough, then this is a negligible excess as $e^{-\alpha} \approx 0$. However, for moderate to small $\Delta t$, the numerical results do not model the physical phenomenon properly. It is important to note that even though characteristic methods aim for computations using large time steps, we should still have a proper numerical result even when the time steps are small.

\begin{figure}[h]
	\centering
	\begin{tabular}{c@{\hspace*{2em}}c}
		\includegraphics[width=0.45\textwidth]{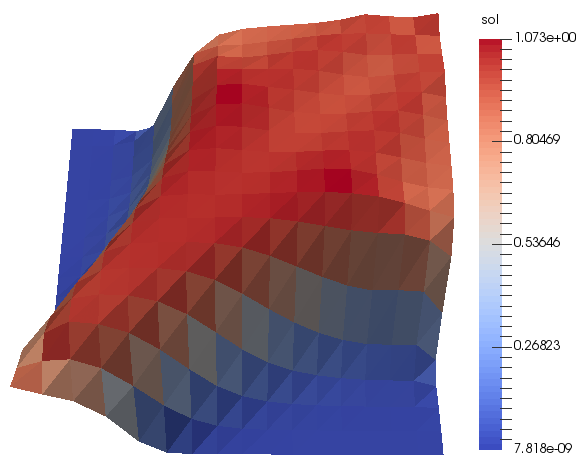} & 		\includegraphics[width=0.45\textwidth]{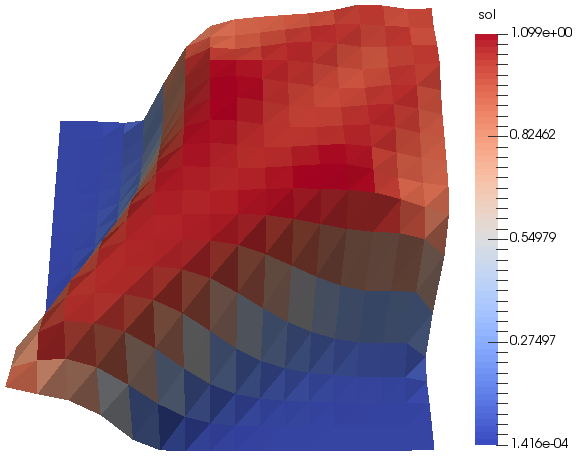}
	\end{tabular}
	\caption{Cartesian mesh, weighted trapezoid rule for source terms, KR velocities, $t=10$ years  (left: trace forward as in Section \ref{numSource}; trace forward as in \cite{AW11-stability-monotonicity-implementation}).}
	\label{AC310_l2}
\end{figure}

Figure \ref{AC310_l2} (right) shows the numerical solutions obtained at $t=10$ years upon computing the integrals as in \cite{AW11-stability-monotonicity-implementation}, with the moderate time step $\Delta t=36$ days. As expected, due to injection of too much fluid, the overshoot at the right of Figure \ref{AC310_l2} (around 9.9\%) is larger than the one on the left (around 7.3\%). This particular feature is even more evident if we take smaller time steps. Due to this, we see that the implementation we propose in Section \ref{numSource} is more accurate.

\subsection{A criterion for choosing the number of points per edge}
In general, a polygon formed by tracking back only vertices and edge midpoints might not give a good approximation to the trace back region $\hat{K}$. The mesh regularity parameter, defined as 
	\begin{equation}\nonumber
m_{\mesh \text{reg}} :=\max_{K\in\mesh}\dfrac{\mathrm{diam}(K)^{2}}{|K|},
\end{equation}
has been used as a criterion for determining the proper number of points to track per edge (see Table \ref{tab:reg}). To be exact, $\lceil \log_{2}(m_{\mesh \text{reg}})\rceil$ points per edge should be tracked in order to obtain a reasonable numerical solution. These results have been established in \cite{CD17-HMM-ELLAM} for KR velocities. For this paper, we verify that this still holds for C velocities, and on the previously not considered non-conforming mesh.
\begin{table}[h]
	\begin{center}
		\begin{tabular}{|c|c|c|c|}
			\hline
			Mesh & $m_{\mesh\text{reg}}$ & $\log_{2}(m_{\mesh\text{reg}})$ & points per edge \\
			\hline
			Cartesian & 2  & 1 & 1 \\
			\hline
			Hexahedral & 5.4772 & 2.4534 & 3 \\
			\hline
			Non-conforming & 2.7619 & 1.4657 & 2 \\
			\hline
			Kershaw & 32.0274 & 5.0012 & 6 \\
			\hline
			
		\end{tabular} 
	\end{center}
	\caption{Regularity parameter of the meshes and nb of points to approximate
		the trace-back cells.}
	\label{tab:reg}
\end{table} 

\begin{figure}[h]
	\centering
	\begin{tabular}{c@{\hspace*{2em}}c}
		\includegraphics[width=0.45\textwidth]{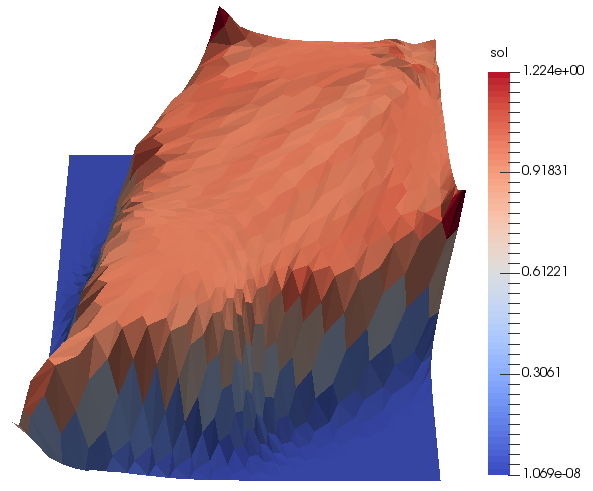} & 		\includegraphics[width=0.45\textwidth]{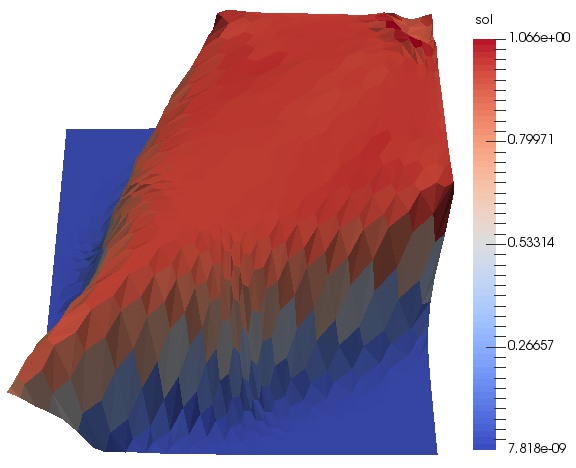}
	\end{tabular}
	\caption{Hexahedral mesh, weighted trapezoid rule for source terms, C velocities, $t=10$ years (left: edge midpoint, right: 3 points per edge).}
	\label{H310_l2}
\end{figure}

We start by demonstrating on hexahedral meshes that even for C velocities, tracking only vertices and edge midpoints do not give a good approximation (see Fig. \ref{H310_l2} left). As expected, taking 3 points per edge, as suggested in Table \ref{tab:reg}, then gives a better result, with an overshoot $< 7\%$ (see Fig. \ref{H310_l2} right). 

This heuristic choice of number of points along each edge is further backed up by the numerical solutions for the non-conforming meshes, and also for the very distorted `Kershaw' meshes, see Figures \ref{nCon_l2} and \ref{CK310}. 

We note that not all cells are highly "irregular"; thus, tracking a lot of points along each edge for the whole mesh introduces unnecessary numerical cost. If the cell $K$ is an injection or production cell, then we still track, as before, $\lceil \log_{2}(m_{\mesh \text{reg}})\rceil$ points along each edge. As an improvement, if $K$ is neither an injection or production cell, we determine the number of points to track along each edge of cell $K$ by measuring instead the cell regularity parameter defined to be 
\be \nonumber
m_{K\text{reg}} := \dfrac{\text{diam}(K)^2}{|K|}
\ee  and track $\lceil \log_{2}(m_{K \text{reg}})\rceil$ points along each edge of cell $K$.  By doing so, we reduce the computational cost without degrading the quality of the numerical solutions. 
\subsection{Comparison of the two reconstructions of the Darcy velocity}
In this section, we compare the numerical solutions at $t=10$ years obtained through simulations using KR velocities to those obtained when we do our simulations using C velocities. These will be performed over Cartesian, hexahedral, non-conforming and Kershaw type meshes using the recommended number of points per edge (see Table \ref{tab:reg}) for tracking.
\begin{figure}[h]
	\begin{tabular}{cc}
		\includegraphics[width=0.4\linewidth]{Cart_10} & 		\includegraphics[width=0.4\linewidth]{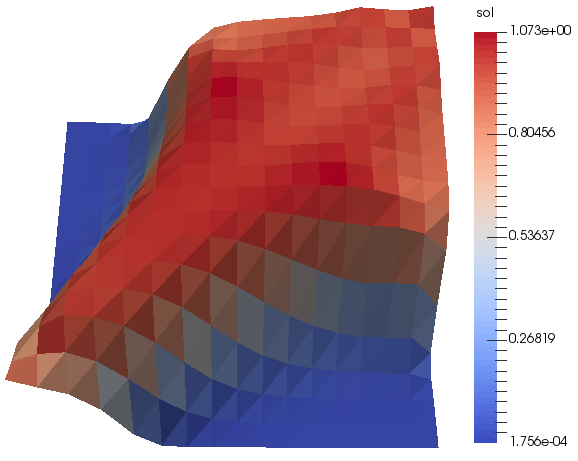}
	\end{tabular}
	\caption{Cartesian mesh, weighted trapezoid rule for source terms, $t=10$ years (left: KR velocities, right: C velocities).}
	\label{CC310}
\end{figure}

\begin{figure}[h]
	\centering
	\begin{tabular}{c@{\hspace*{2em}}c}
		\includegraphics[width=0.45\textwidth]{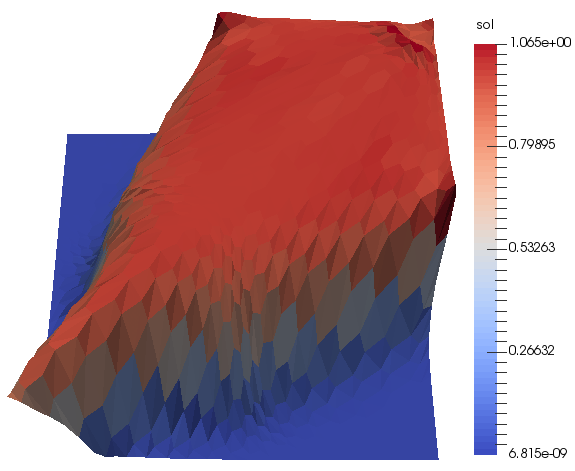} & 		\includegraphics[width=0.45\textwidth]{Hexa_3_10_cons}
	\end{tabular}
	\caption{Hexahedral mesh, weighted trapezoid rule for source terms, $t=10$ years (left: KR velocities, right: C velocities).}
	\label{H310}
\end{figure}

\begin{figure}[h]
	\centering
	\begin{tabular}{c@{\hspace*{2em}}c}
		\includegraphics[width=0.45\textwidth]{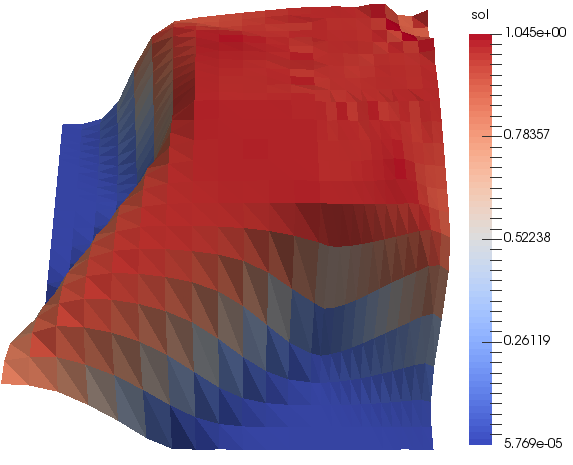} & 		\includegraphics[width=0.45\textwidth]{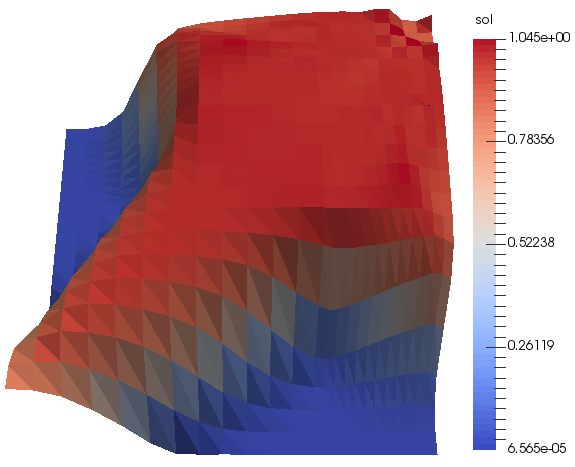}
	\end{tabular}
	\caption{Non-conforming mesh, 2 points per edge, weighted trapezoid rule for source terms, $t=10$ years (left: KR velocities, right: C velocities).}
	\label{nCon_l2}
\end{figure}

Figures \ref{CC310} to \ref{nCon_l2} compare the numerical solutions at $t=10$ years, on Cartesian, hexahedral, and non-conforming meshes, between KR velocities and C velocities. We notice that for these types of meshes, the numerical solutions obtained using KR velocities are essentially the same as those obtained using C velocities. 

\begin{figure}[h]
	\centering
	\begin{tabular}{c@{\hspace*{2em}}c}
		\includegraphics[width=0.45\textwidth]{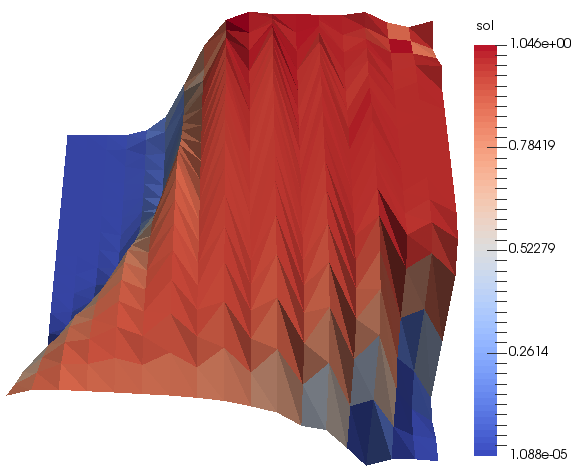} & 		\includegraphics[width=0.45\textwidth]{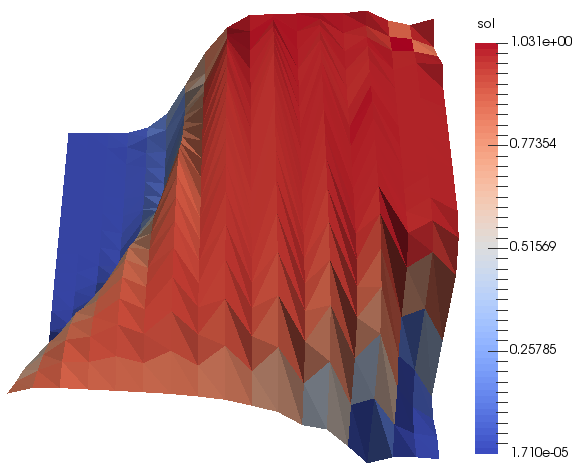}
	\end{tabular}
	\caption{Kershaw mesh, 6 points per edge, weighted trapezoid rule for source terms, $t=10$ years (left: KR velocities, right: C velocities).}
	\label{CK310}
\end{figure}

Now, looking at the numerical results along Kershaw type meshes on Figure \ref{CK310}, we see that C velocities give us a slightly better numerical result. The maximum overshoot is now around $ 3.1 \%$, as opposed to a maximum overshoot of around $ 4.5 \%$ for KR velocities. Moreover, we observe that the solutions on the Cartesian, hexahedral, and non-conforming meshes are very similar, showing a certain robustness of the method with respect to the choice of mesh. However, the solution on the Kershaw mesh is noticeably different, which signals the presence of a grid effect. In the next section, we consider streamlines to back up
our claim that C velocities yield a better numerical approximation than KR velocities. These streamlines also enable us to understand why grid effects are present in Kershaw type meshes.

\subsection{Streamlines resulting from the reconstructed Darcy velocities}\label{sec:stream}

We start by plotting the streamlines for the velocities reconstructed using both KR and C velocities on Cartesian, hexahedral, and non-conforming meshes, which can be seen in figures \ref{SCart} to \ref{SnCon}. For the following figures, the particles are assumed to have travelled for 3600 days, which is approximately 10 years.
\begin{figure}[h]
	\centering
	\begin{tabular}{c@{\hspace*{2em}}c}
		\includegraphics[width=0.45\textwidth]{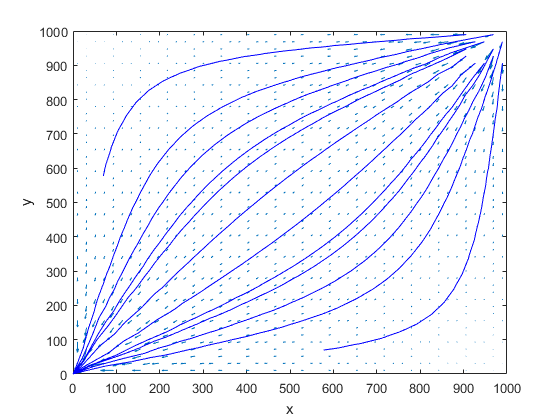} & 		\includegraphics[width=0.45\textwidth]{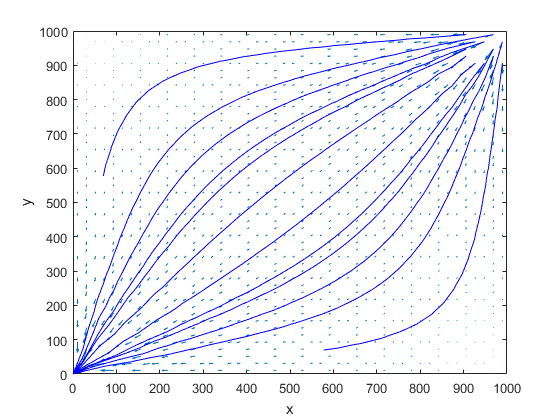}
	\end{tabular}
	\caption{Streamlines along Cartesian mesh, (left: KR velocities; right: C velocities).}
	\label{SCart}
\end{figure}
\begin{figure}[h]
	\centering
	\begin{tabular}{c@{\hspace*{2em}}c}
		\includegraphics[width=0.45\textwidth]{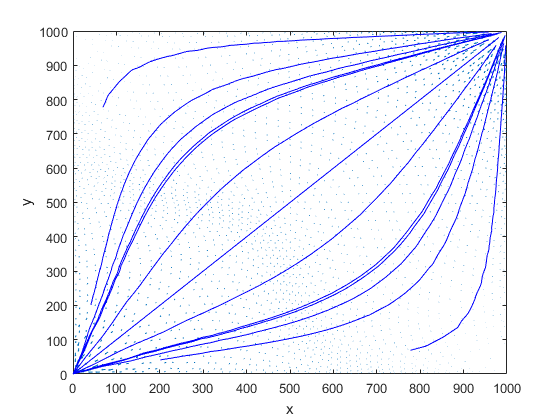} & 		\includegraphics[width=0.45\textwidth]{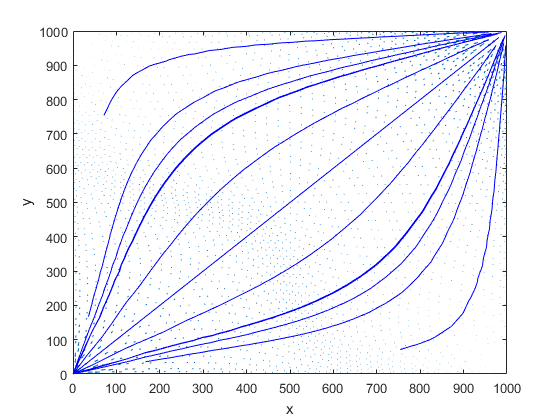}
	\end{tabular}
	\caption{Streamlines along the hexahedral mesh, (left: KR velocities; right: C velocities).}
	\label{SHex}
\end{figure}
\begin{figure}[h]
	\centering
	\begin{tabular}{c@{\hspace*{2em}}c}
		\includegraphics[width=0.45\textwidth]{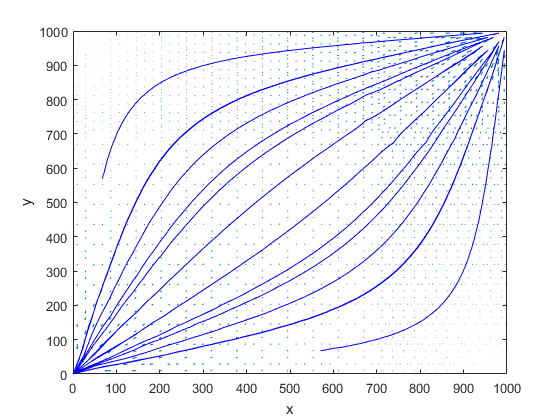} & 		\includegraphics[width=0.45\textwidth]{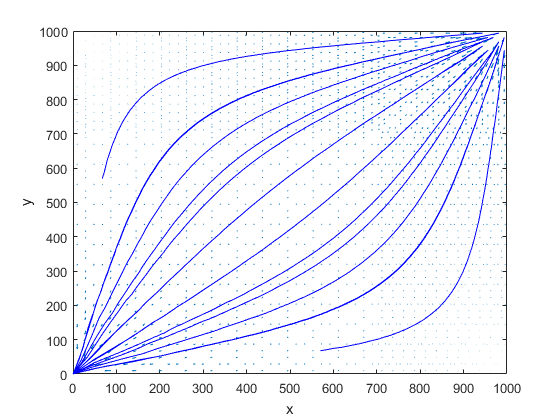}
	\end{tabular}
	\caption{Streamlines along the non-conforming mesh, (left: KR velocities; right: C velocities).}
	\label{SnCon}
\end{figure}

As can be seen in figures \ref{SCart} to \ref{SnCon}, the streamlines along the Cartesian, hexahedral, and non-conforming meshes are quite similar (whether we use KR velocities or C velocities), which explains why the numerical solutions obtained for the concentration are essentially the same, regardless of which velocity we use. We note however, that upon comparing the streamlines of hexahedral type meshes against those of Cartesian or non-conforming meshes, that there is a slight difference in how the fluid travels. In particular, we take note that the streamline arising from the rightmost position of the plots ends near position (770, 60) for the hexahedral meshes, and near position (570, 60) for the other 2 meshes. This particular difference can also be seen in the concentration profiles upon comparing Figure \ref{H310} with Figures \ref{CC310} and \ref{nCon_l2}. Note here that the concentration profile obtained on a hexahedral mesh exhibits a sharper fingering effect along the diagonal, as compared to the concentration profile obtained on the other 2 meshes. This phenomenon is caused by using fluxes generated by the low-order HMM method, which are prone to grid effects; in a future work, we will be exploring the usage, for the pressure equation only, of high-order methods such as the HHO scheme \cite{Di-Pietro.Ern.ea:14} to improve the quality of the numerical fluxes on distorted meshes. The effect is not as severe as the Kershaw type meshes though, since hexahedral meshes are not as distorted. 
\begin{figure}[h]
	\centering
	\begin{tabular}{c@{\hspace*{2em}}c}
		\includegraphics[width=0.45\textwidth]{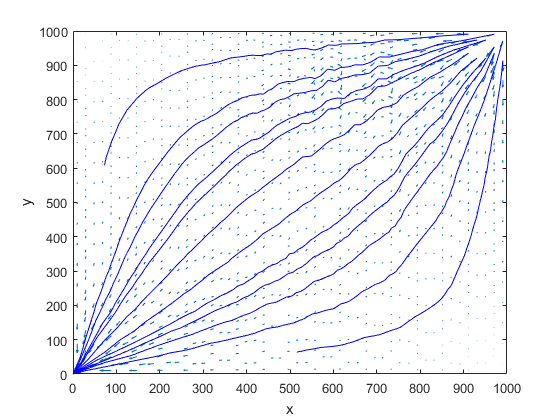} & 		\includegraphics[width=0.45\textwidth]{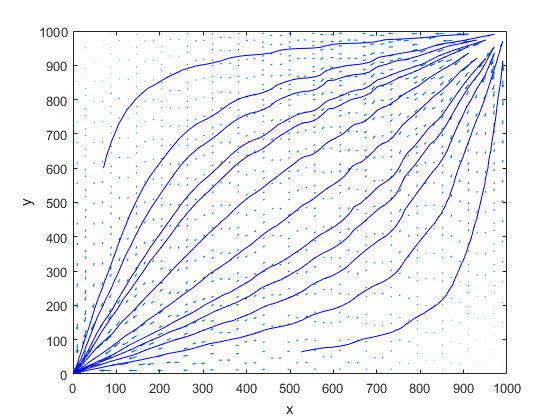}
	\end{tabular}
	\caption{Streamlines along Kershaw mesh, (left: KR velocities; right: C velocities).}
	\label{SKer}
\end{figure}

The streamlines presented in Figure \ref{SKer} show that, on Kershaw meshes, the flow resulting from the C velocities is
more natural than that of KR velocities, which exhibits more staggering in the upper region. This explains why the numerical solution for the concentration obtained using the C velocities is slightly better than that of the KR velocities. Noting that the streamlines obtained along all 4 types of meshes are quite similar, we have still yet to address the grid effects that were present for the Kershaw mesh. We understand this by tracking the particles in the streamline for a shorter time period of 2520 days, or approximately 7 years. For Figures \ref{Sch7yr} and \ref{Snk7yr}, we will be using C velocities.
\begin{figure}[h]
	\centering
	\begin{tabular}{c@{\hspace*{2em}}c}
		\includegraphics[width=0.45\textwidth]{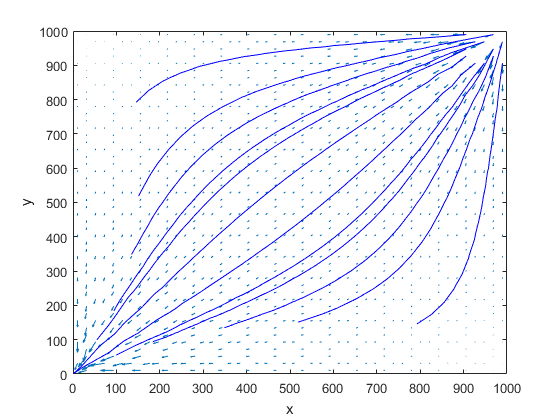} & 		\includegraphics[width=0.45\textwidth]{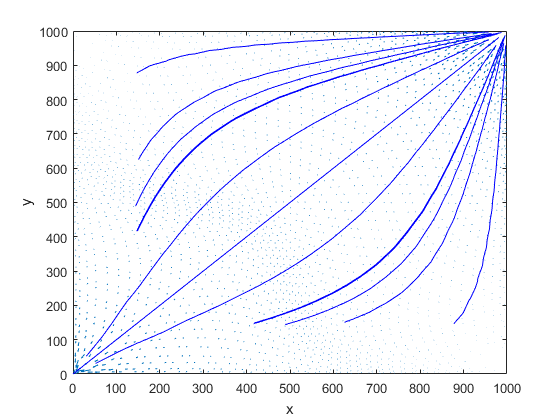}
	\end{tabular}
	\caption{Streamlines at 2520 days, C velocities (left: Cartesian mesh; right: Hexahedral mesh).}
	\label{Sch7yr}
\end{figure}

\begin{figure}[h]
	\centering
	\begin{tabular}{c@{\hspace*{2em}}c}
		\includegraphics[width=0.45\textwidth]{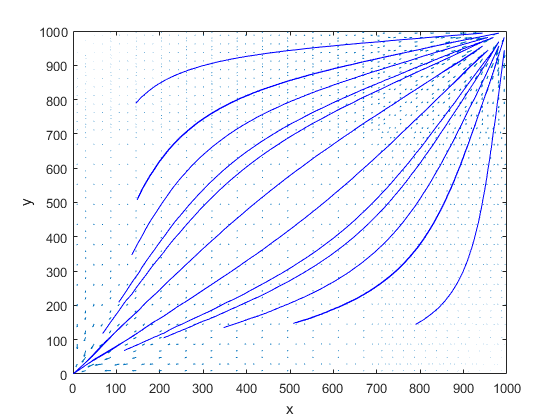} & 		\includegraphics[width=0.45\textwidth]{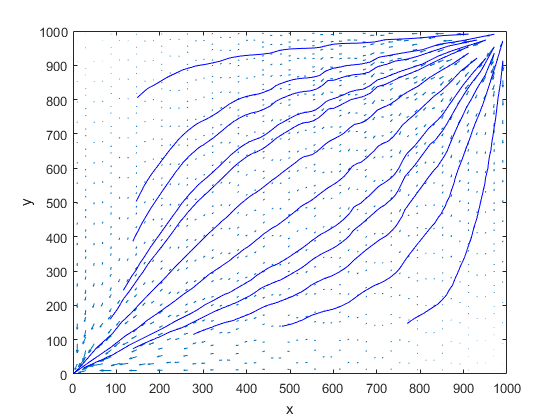}
	\end{tabular}
	\caption{Streamlines at 2520 days, C velocities (left: non-conforming mesh; right: Kershaw mesh).}
	\label{Snk7yr}
\end{figure}

As can be seen in Figure \ref{Sch7yr}, the streamlines resulting from both Cartesian and hexahedral meshes are almost symmetric with respect to the line $y=x$. A similar observation can be made for non-conforming meshes (see Fig. \ref{Snk7yr} left). On the contrary, due to the large distortion of the Kershaw mesh (see Fig. \ref{NKmeshes} right), the advection field on this mesh is such that particles travelling below the line $y=x$ reach the production well $(0,0)$ faster than those travelling above the line. In particular, upon looking at the plot on the right of Figure \ref{Snk7yr}, we focus on the third streamlines from the right and top boundaries, which should be symmetric about the diagonal $x=y$. 
The streamline below $y=x$ has travelled near the point (275,100), whereas the streamline
above the line $y=x$ has only reached some point near (150,400). Hence, the distorted mesh leads to
an advection of the fluid that is skewed towards
the lower part of the domain, thus leading to the numerical results that vary from those obtained in the other types of meshes. 
We may also compare this third streamlines to the streamlines obtained from the other types of meshes. This comparison confirms that, for Kershaw meshes, advection below the line $y=x$ is much faster than expected. Similar observations are obtained upon looking at streamlines obtained from KR velocities. 
\begin{figure}[h]
	\centering
	\begin{tabular}{c@{\hspace*{2em}}c}
		\includegraphics[width=0.45\textwidth]{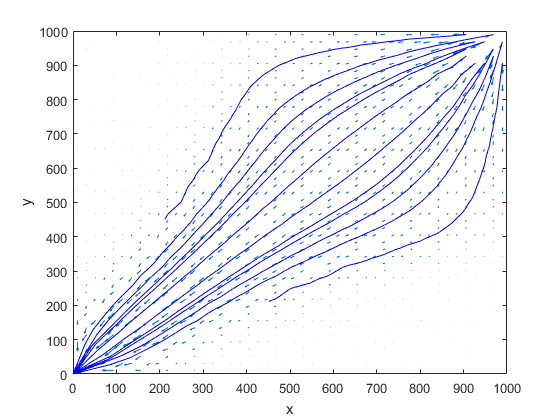} & 		\includegraphics[width=0.45\textwidth]{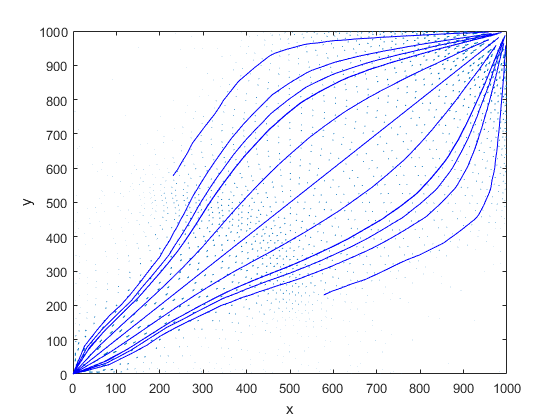}
	\end{tabular}
	\caption{Streamlines using velocity profile at final time step, C velocities (left: Cartesian mesh; right: Hexahedral mesh).}
	\label{Schyr10}
\end{figure}

\begin{figure}[h]
	\centering
	\begin{tabular}{c@{\hspace*{2em}}c}
		\includegraphics[width=0.45\textwidth]{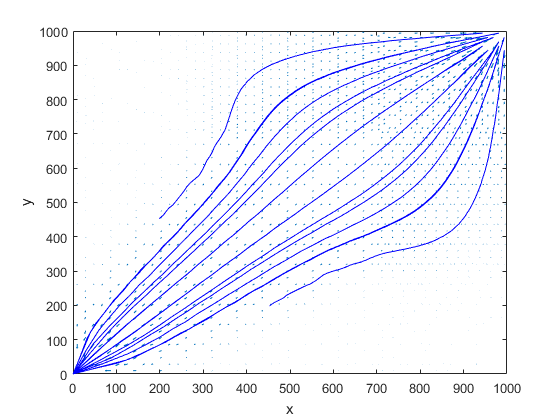} & 		\includegraphics[width=0.45\textwidth]{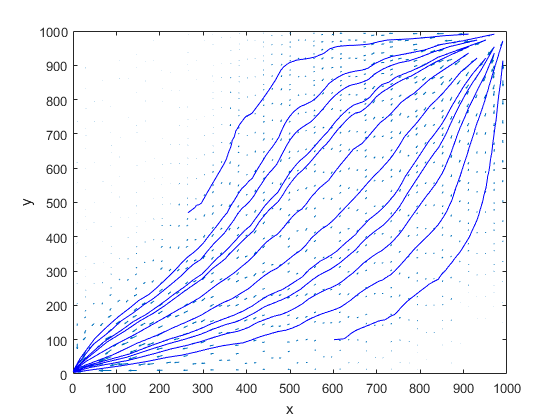}
	\end{tabular}
	\caption{Streamlines using velocity profile at final time step, C velocities (left: non-conforming mesh; right: Kershaw mesh).}
	\label{Snkyr10}
\end{figure}

\begin{remark}
	Figures \ref{SCart} to \ref{Snk7yr} were obtained from the velocity profile at the first time step; hence, the dependency of the velocity profile on the concentration $c$ due to a high mobility ratio of $M=41$ was not visible. To complete the presentation, we show in Figures \ref{Schyr10} and \ref{Snkyr10} the velocity profile obtained at the final time step. Also, the particles along the streamline are assumed to have traveled 3600 days, or approximately 10 years. Indeed, upon looking at these figures side by side with Figures \ref{CC310} to \ref{CK310}, we see the dependence of the velocity profile on the concentration, i.e., it tends to flow along the region(s) with high concentration. 
\end{remark}

\subsection{Comparison with other numerical schemes}

In this section, we compare the numerical results obtained from HMM--ELLAM to numerical results obtained from other schemes, such as MFV with upwinding \cite{CD-07} and MFEM--ELLAM \cite{WLELQ-00}. This will be done on two test cases. The first test case will be done under the same test data and parameters considered above. The second test case will be done instead with an inhomogeneous permeability tensor $\mathbf{K}=20\mathbf{I}$ md over the region $(200,400)\times(200,400)\cup (200,400)\times(600,800)\cup(600,800)\times(200,400)\cup(600,800)\times(600,800)$  and $\mathbf{K}=80\mathbf{I}$ md elsewhere, while holding all other test data and parameters to be the same as those of the initial test case. This comparison is performed on a Cartesian mesh of size $50 \times 50$ ft (so that the discontinuities present in the second test case are aligned with the edges of the cells); other meshes
could be considered (triangular for MFEM--ELLAM, and any polytopal mesh for MFV--upwind), with
similar conclusions. We perform the second test case with a time step of $\Delta t=2.5$ days, as opposed to the time step of $36$ days for the initial test case. We do this in order to have a fair comparison with the higher order scheme implemented in \cite{AD17} (which was second order in time, implemented with a time step of $\Delta t=7.2$ days). As a point of reference, we present in Figures \ref{CC10-contour} and \ref{nonhomo-CC10-contour} the concentration profile and contour plot at $t=10$ years for the HMM--ELLAM, for the first and second test cases, respectively.  
\begin{figure}[h]
	\begin{tabular}{cc}
		\includegraphics[width=0.4\linewidth]{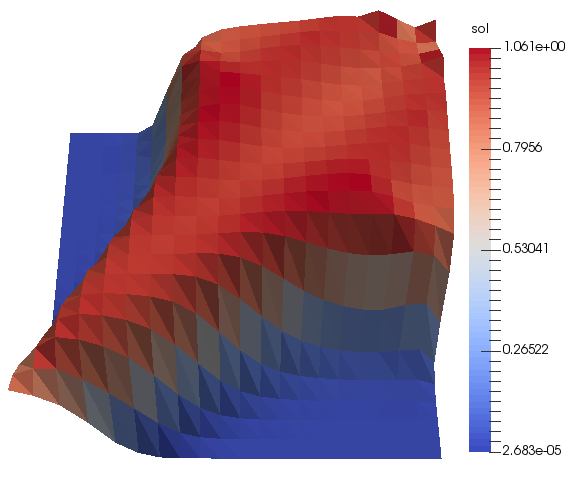} &
		\includegraphics[width=0.4\linewidth]{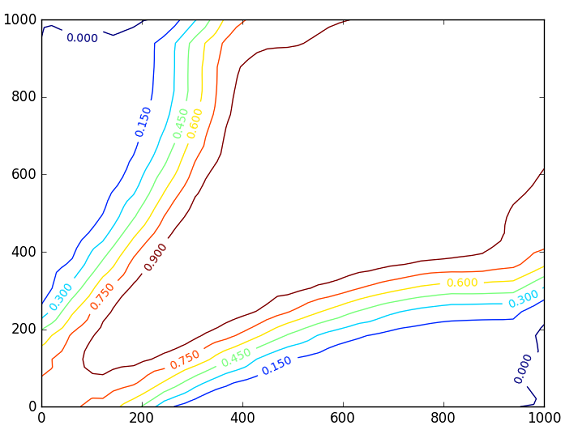} 		
	\end{tabular}
	\caption{Numerical concentration obtained through HMM--ELLAM at $t=10$ years, homogeneous permeability (left: profile; right: contour plot).}
	\label{CC10-contour}
\end{figure}
\begin{figure}[h]
	\begin{tabular}{cc}
		\includegraphics[width=0.4\linewidth]{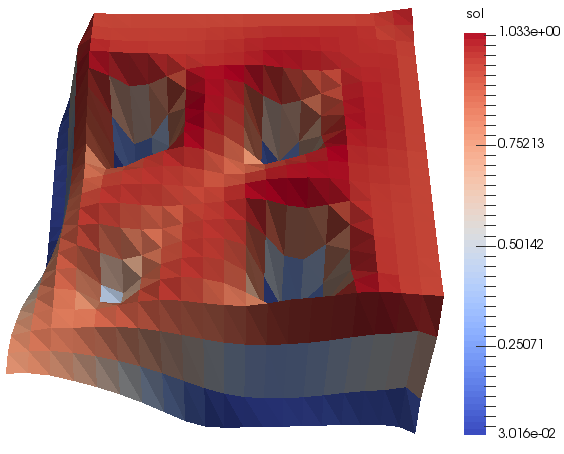} &
		\includegraphics[width=0.4\linewidth]{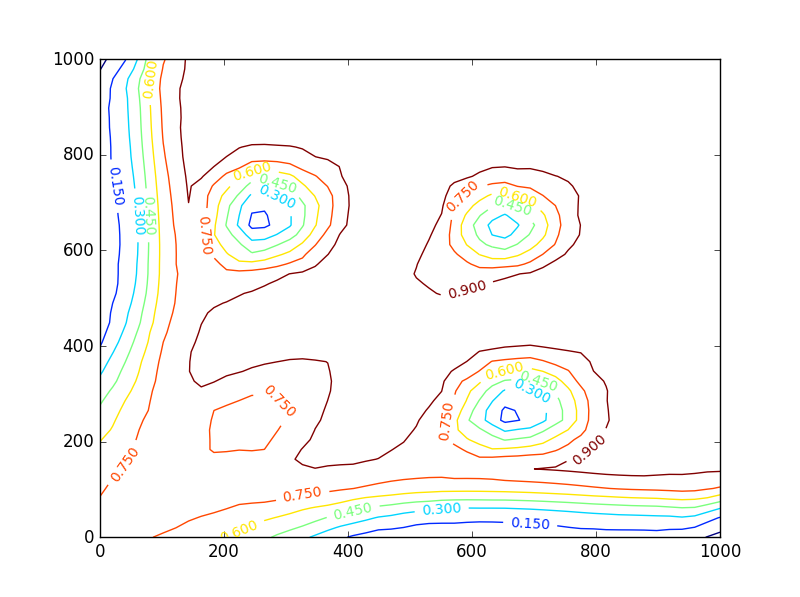} 		
	\end{tabular}
	\caption{Numerical concentration obtained through HMM--ELLAM at $t=10$ years, inhomogeneous permeability (left: profile; right: contour plot).}
	\label{nonhomo-CC10-contour}
\end{figure}
\subsubsection{MFEM--ELLAM}
As both MFEM--ELLAM and HMM--ELLAM are characteristic methods, tracking is implemented for both schemes for the concentration equation. Typically, HMM--ELLAM schemes only need to track the vertices, together with 1 point per edge for Cartesian type meshes, unless the time step is too large relative to the spacial discretisation (which will result to either a degenerate or self intersecting polygon approximating the trace back region). This can be avoided by reducing the time step or increasing the number of points per edge. However, MFEM--ELLAM schemes need to track a bare minimum of 3-4 points in each cell to get a correct quadrature rule to integrate the basis functions (and much more than 4 points in case these bases functions become too distorted by the tracking velocity \cite{S15}); this is why we expect the MFEM--ELLAM to be computationally more expensive than the HMM--ELLAM.

 Aside from computational cost, a more important thing to consider would be the quality of the numerical solutions. Figures \ref{MFEM--ELLAM-10} and \ref{nonhomo-MFEM--ELLAM-10} give us the numerical solution and contour plot obtained from MFEM--ELLAM at $t=10$ years for the first and second test cases, respectively. These numerical outputs were obtained by a straight application of the MFEM--ELLAM algorithm as presented in \cite{WLELQ-00}, with several hundred
of quadrature points per cell around the injection well.

 \begin{figure}[h]
 	\centering
 	\begin{tabular}{c@{\hspace*{2em}}c}
 		\includegraphics[width=0.4\textwidth]{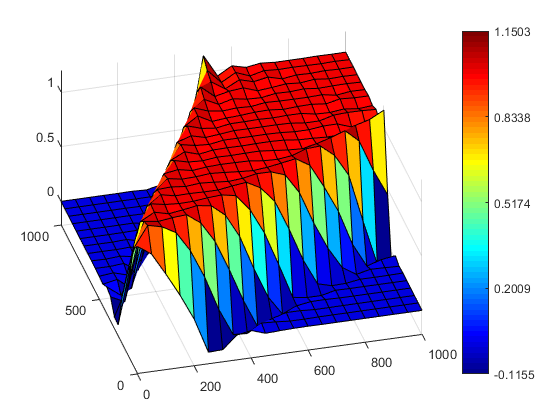} & 		\includegraphics[width=0.4\textwidth]{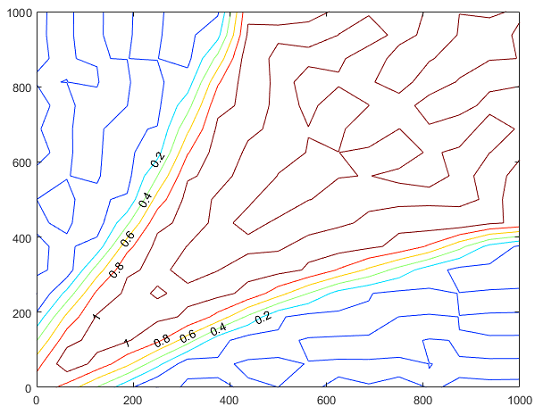}
 	\end{tabular}
 	\caption{ Numerical concentration obtained through MFEM--ELLAM at $t=10$ years, homogeneous permeability (left: profile; right: contour plot).}
 	\label{MFEM--ELLAM-10}
 \end{figure}
 \begin{figure}[h]
	\centering
	\begin{tabular}{c@{\hspace*{2em}}c}
		\includegraphics[width=0.4\textwidth]{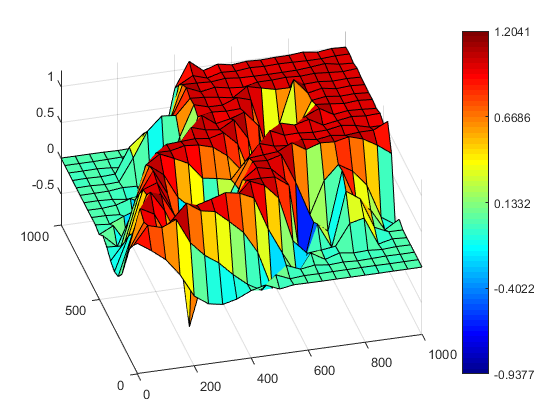} & 		\includegraphics[width=0.4\textwidth]{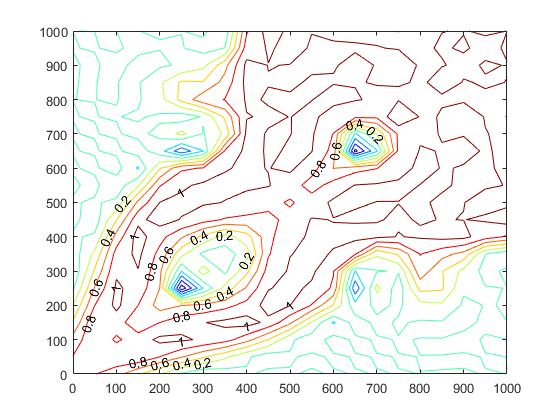}
	\end{tabular}
	\caption{ Numerical concentration obtained through MFEM--ELLAM at $t=10$ years, inhomogeneous permeability (left: profile; right: contour plot).}
	\label{nonhomo-MFEM--ELLAM-10}
\end{figure}
The concentration profile and the contour plots obtained from both schemes are quite similar, with overshoots and undershoots that are typical of characteristic-based methods \cite{ERW83}. However, here, the overshoot in the MFEM--ELLAM (around 15\% for homogeneous permeability and 20\% for inhomogeneous permeability) is higher than that of the HMM--ELLAM (6\% and 3\% respectively) for both cases. Note that for the source terms, the MFEM--ELLAM integrates a non-constant function through quadrature rules. The main source of error encountered upon computing these integrals arise due to the presence of steep source terms. However, for the HMM--ELLAM, a different treatment of the source terms (see Section \ref{numSource}) was implemented. We recall that this can be physically interpreted as spreading the injected fluid around the region surrounding the injection well.  Naturally, this resulted to the HMM--ELLAM having a smaller overshoot than the MFEM-ELLAM. We also note that the MFEM--ELLAM exhibits severe undershoots (up to around 11\% for homogeneous permeability near the production cell and 90\% for inhomogeneous permeability in the low-permeability regions), which is not present in the HMM--ELLAM. This severe undershoot might be due to the fact that conforming FE methods, used for solving the concentration equation, have unknowns on the vertices that sit at the permeability discontinuities. It has been noticed that, for transport of species in heterogeneous domains, schemes with unknowns at the vertices may lead to
unacceptable results on coarse meshes, see \cite{EGHM12}. On the other hand, the transition layer (from $c\approx 0$ to $c\approx 1$) is thinner for the MFEM--ELLAM than for the HMM--ELLAM.

\subsubsection{MFV--upwind}
Over each time step, the MFV--upwind requires, for the concentration, the solution of a linear system which has the same sparsity and number of unknowns as the HMM--ELLAM. Moreover, due to the absence of characteristic tracking and computation of integrals over trace-back regions, the computational cost of MFV-upwind scheme is much cheaper than that of the HMM--ELLAM. Next, we compare the quality of the solutions obtained by looking at Figures \ref{C10-upwind} and \ref{noncon-C10-upwind}. It is quite notable that  the solution remains bounded between 0 and 1 (actually, no undershoot occurs, and the overshoot is less than 0.01\%).
 However, upwind schemes tend to introduce excess diffusion, and thus the strong viscous fingering effects we expect have been spread out. This can be seen more clearly by looking at the contour plot (Figure \ref{C10-upwind} right). Upon comparison with contour plots obtained for the HMM--ELLAM and MFEM--ELLAM schemes (Figures  \ref{CC10-contour} and \ref{MFEM--ELLAM-10}, right), we indeed see that the strong viscous fingering expected along the diagonal has been spread out by the upwind scheme. A similar conclusion can be drawn for the inhomogeneous permeability tensors upon comparing Figure \ref{noncon-C10-upwind} to Figures \ref{nonhomo-CC10-contour} and \ref{nonhomo-MFEM--ELLAM-10}. However, upon comparing Figures \ref{nonhomo-CC10-contour}, \ref{nonhomo-MFEM--ELLAM-10}, and \ref{noncon-C10-upwind} we notice that the concentration profiles obtained on both HMM--ELLAM and MFV-upwind are quite similar. Even the transition layers obtained on the respective contour plots seem to agree with one another. Due to this, we cannot make a definitive conclusion at this point. Another point of comparison will be presented in Section \ref{sec:rec-oil}. 
\begin{figure}[h]
	\begin{tabular}{cc}
		\includegraphics[width=0.4\linewidth]{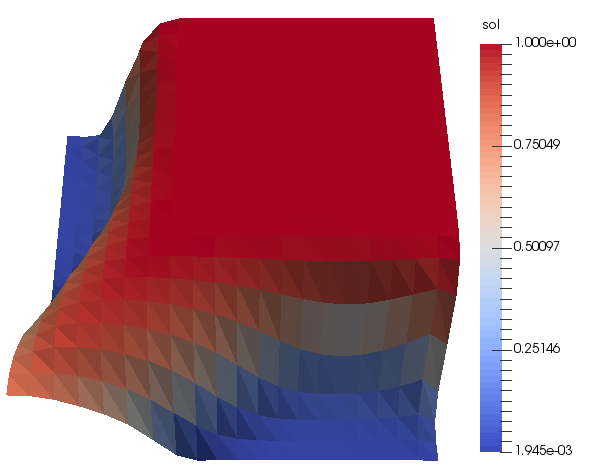} & 		\includegraphics[width=0.4\linewidth]{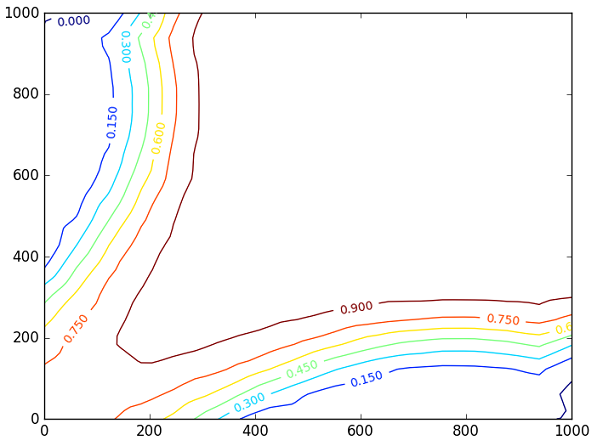}
	\end{tabular}
	\caption{ Numerical concentration obtained through MFV--upwinding at $t=10$ years, homogeneous permeability (left: profile; right: contour plot) }
	\label{C10-upwind}
\end{figure}
\begin{figure}[h]
	\begin{tabular}{cc}
		\includegraphics[width=0.4\linewidth]{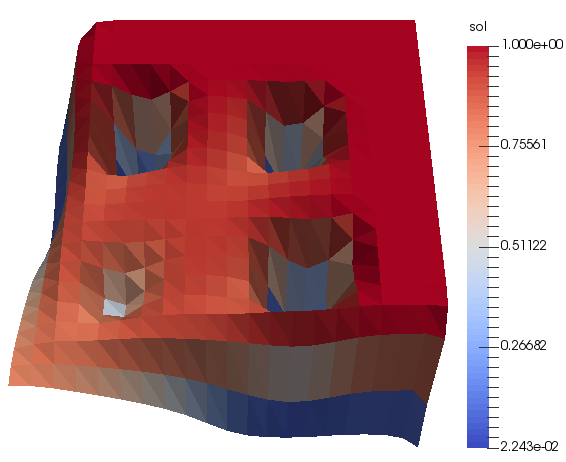} & 		\includegraphics[width=0.4\linewidth]{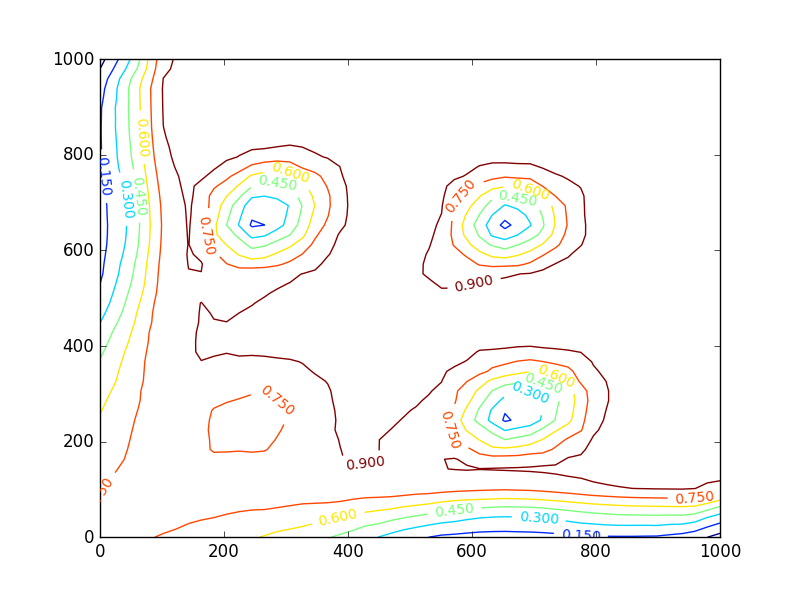}
	\end{tabular}
	\caption{ Numerical concentration obtained through MFV--upwinding at $t=10$ years, inhomogeneous permeability (left: profile; right: contour plot) }
	\label{noncon-C10-upwind}
\end{figure}

\subsubsection{Recovered oil} \label{sec:rec-oil}

A particular quantity of interest in performing numerical simulations of the model \eqref{eq:model} is the amount of oil (percentage of domain) recovered at time $T$, given by $|\O|^{-1} \int_\O \phi c(x,T)$. Figure \ref{oil_recovered} shows the percentage of domain recovered at $t=10$ years, for each of the three numerical schemes on Cartesian meshes. These results were obtained starting with a very coarse mesh consisting of square cells of dimension $200\times 200$, while refining the spacial discretisation by a factor of 2, leading to a final mesh consisting of square cells of dimension $25\times 25$.
\begin{figure}[h]
			\begin{tabular}{cc}
			\includegraphics[width=0.48\linewidth]{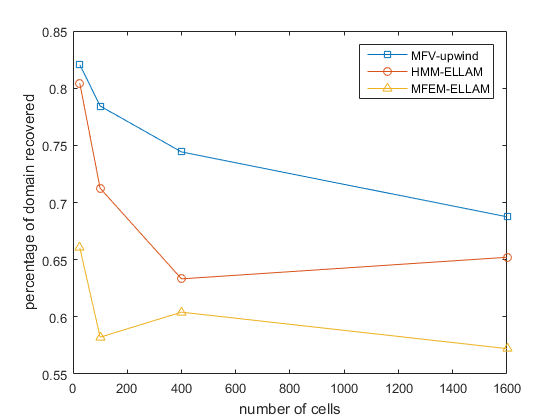} & 		\includegraphics[width=0.48\linewidth]{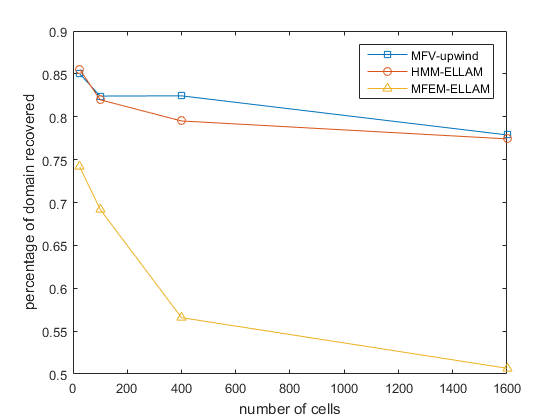}
		\end{tabular}	
	\caption{ Percentage of domain recovered at $t=10$ years (left: homogeneous permeability; right: inhomogeneous permeability).}
	\label{oil_recovered}
\end{figure}

It can be seen here that each of the three schemes seem to yield numerical results that converge to different quantities. The MFV--upwind scheme probably overestimates the amount of oil recovered due to the diffusion it introduces, and thus a "wider" region has been sweeped. On the other hand, it is likely that the MFEM--ELLAM underestimates the amount of oil recovered due to the presence of undershoots, as we have noted from Figures \ref{MFEM--ELLAM-10} and \ref{nonhomo-MFEM--ELLAM-10} (left). Thus, we expect the amount of oil recovered to be somewhere between these two values. Here, the solution obtained from HMM--ELLAM converges to such a value, which seems to sit about 65\% for the first test case and 77\% for the second test case. 

As an element of comparison, we consider the results obtained in \cite{AD17} on the same model with the Hybrid High-Order (HHO) method. This method is based on polynomials, with an arbitrary chosen degree, in the cells and on the faces, and can theoretically achieve any order of accuracy on any polytopal mesh (at an increased computational cost compared to HMM and ELLAM methods, of course); in practice, though, tests are usually ran up to order 4 or 5; we refer to \cite{Di-Pietro.Ern.ea:14} for the presentation on the pure diffusion equation, and to \cite{DDE15} for advection--diffusion--reaction models. The tests in \cite{AD17} were performed up to order 4 and seem to indicate that the expected recovery after 10 years is around 65\% for the homogeneous test case, and around 75\% for the inhomogeneous test case. The low-order, less expensive HMM--ELLAM method seems to provide very similar results, on the contrary to MFV--ELLAM and MFEM--ELLAM. The latter, in particular, only predicts a 50\% recovery for the inhomogeneous test case, which is much lower than all other methods; this is naturally expected because of the presence of severe undershoots. The concentration profiles obtained from the HMM--ELLAM are also similar to the one obtained through the HHO in \cite{AD17}.

Moreover, due to the thinner transition layer present in the HMM--ELLAM, this solution is preferred over the MFV-upwind scheme. On the other hand, since the solutions between the HMM--ELLAM and MFV-upwind are quite close to each other for the second test case, one might argue that MFV-upwind would be preferable, due to the cheaper computational cost associated with it. This is not necessarily the case, as we note that the HMM--ELLAM already generates an acceptable concentration profile, as well as a good approximation of the amount of oil recovered upon taking a mesh with square cells of dimension $50\times 50$ ft, whereas MFV-upwind needs to go to the refined mesh with square cells of dimension $25\times 25$ ft (which requires solving a much larger system) before producing such results.

\section{Conclusion}

Most of the previous work focused on separately developing  schemes for either solutions of anisotropic diffusion equations (corresponding to the pressure equation \eqref{pressure}) or schemes for advection dominated equations (corresponding to the concentration equation \eqref{concentration}).
This work presents a complete scheme for the coupled system, with details on how to combine the scheme for the pressure equation (HMM) with the scheme for the concentration equation (ELLAM). Moreover, this is usable on generic meshes as encountered in real-world applications -- with the usual caveats on distorted meshes. 

A new method has been introduced to reconstruct Darcy velocities from internal numerical fluxes, based
on a consistency relation. This method turns out to be more cost efficient, and more efficient on distorted meshes than the one presented in \cite{KR03-kuznetsov-repin}.
Its construction in the 2-dimensional case seems promising enough to envision developing a 3-dimensional
version, which is the purpose of ongoing work.

Our analysis also demonstrates the importance of selecting the correct number of approximation points -- depending on the regularity of the mesh -- to track the cells. An improved treatment of the cells near injection wells has also been introduced, leading to less overshoots and better approximations.

The streamlines in Section \ref{sec:stream} show that velocities reconstructed from a piecewise constant approximation of the edge fluxes exhibit grid effects, especially on distorted grids. A natural direction for the improvement of the scheme, currently being explored, is to solve the pressure equation \eqref{pressure} with a slightly higher order method. High order schemes for anisotropic diffusion equations have been developed for generic meshes, and a generic framework which covers a number of these schemes has been presented in \cite{DSGD18-High-order-methods}. It is therefore possible, on distorted meshes, to implement one of these high order methods for the pressure equation only, with the hope that it provides more accurate fluxes than the HMM to be used in the reconstruction of the Darcy velocity for the ELLAM component on the concentration equation. Since the high order scheme would only be used on the pressure equation, the overall computational cost would be much reduced compared to the full HHO method in \cite{AD17}.

Upon comparison with MFV--upwinding and MFEM--ELLAM schemes, we also got to see that the HMM--ELLAM captures the concentration profile better than upwinding, although as compared to MFEM--ELLAM we have a larger transition layer, as presented in the contour plots in Figures \ref{CC10-contour} to \ref{noncon-C10-upwind} (right). As for the amount of oil recovered, it seems that the HMM--ELLAM also performs better than both MFEM--ELLAM and MFV--upwind schemes for both test cases. Hence, overall, an HMM--ELLAM scheme is preferred over MFEM--ELLAM or MFV-upwind schemes.

\bigskip

\thanks{\textbf{Acknowledgement}: this research was supported by the Australian Government through the Australian Research Council's Discovery Projects funding scheme (pro\-ject number DP170100605).
}

%

\bibliographystyle{abbrv}
\bibliography{hmm-ellam}
\end{document}